\documentclass[11pt,reqno]{amsart}

\usepackage[T1]{fontenc}
\usepackage{lmodern}
\usepackage{microtype}
\usepackage{mathtools}
\usepackage{enumitem}
\usepackage{xcolor}
\usepackage[colorlinks = true,linkcolor = blue!55!black,citecolor = blue!55!black,urlcolor = blue!55!black]{hyperref}
\usepackage[normalem]{ulem}

\setlist[enumerate]{label = (\roman*),leftmargin = 2.2em}
\setlist[itemize]{leftmargin = 2.0em}
\numberwithin{equation}{section}
\newtheorem{theorem}{Theorem}[section]
\newtheorem{proposition}[theorem]{Proposition}
\newtheorem{lemma}[theorem]{Lemma}
\newtheorem{corollary}[theorem]{Corollary}
\theoremstyle{remark}
\newtheorem{remark}[theorem]{Remark}

\title[A characterization of entrywise positivity preservers]
{A finite-order characterization of entrywise positivity preservers}

\author{Ludovick Bouthat}
\address{Département de mathématiques et de statistique, Université Laval, Québec, QC, Canada}
\email{ludovick.bouthat.1@ulaval.ca}

\author{Dominique Guillot}
\address{Department of Mathematical Sciences, University of Delaware, Newark, DE, USA and Département de mathématiques et de statistique, Université Laval, Québec, QC, Canada}
\email{dguillot@udel.edu}

\subjclass[2020]{Primary 15B48; Secondary 15A45, 26A48}
\keywords{Positive semidefinite matrix, entrywise function, positivity preserver, Hankel kernel, Euler derivative, Horn--Loewner theorem, Schur product}

\begin{document}

\begin{abstract}
Fix $I = (0,\rho)$, where $0<\rho\leq\infty$, and let $\mathbb{P}_n(I)$ be the set of positive semidefinite $n\times n$ matrices with entries in $I$. A longstanding problem in matrix theory is to characterize the functions $f: I \to \mathbb{R}$ for which the entrywise calculus $f[A] = [f(a_{ij})]_{i,j = 1}^{n}$ preserves positive semidefiniteness for all $A \in \mathbb{P}_n(I)$. We characterize these functions exactly: if $f\in C^{2n-2}(I)$ and $\mathcal{E} = x\frac{d}{dx}$, then this holds if and only if
$$
  f^{(k)}(x)\geq 0
  \quad (0\leq k\leq n-1)
  \qquad\text{and}\qquad
  \bigl[\mathcal{E}^{i+j}f(x)\bigr]_{i,j = 0}^{n-1}\succeq 0
$$
for every $x\in I$. Regularization then removes all a priori smoothness: for $n\geq2$, every preserver belongs to $C^{2n-4}(I)$ and the same characterization holds by interpreting the last two derivatives in the sense of distributions.

As applications, we recover classical results of FitzGerald--Horn and Vasudeva, and obtain a complete classification of generalized polynomials with prescribed real exponents and arbitrary coefficients. We also determine optimal constants in entrywise domination inequalities under finite regularity, extend the sharp finite-sum thresholds of Belton--Guillot--Khare--Putinar and Khare--Tao to positive mixtures of powers, and answer a question of Khare and Tao by showing that no finite collection of matrices with entries strictly inside $I$ can detect positivity preservation on $\mathbb{P}_n(I)$.
\end{abstract}

\maketitle

\section{Introduction}

Let $I = (0,\rho)$, where $0<\rho\leq\infty$, and let $\mathbb{P}_n(I)$ denote the set of positive semidefinite $n\times n$ matrices with entries in $I$. Given a function $f:I\to\mathbb{R}$ and a matrix $A = [a_{ij}]_{i,j = 1}^n\in\mathbb{P}_n(I)$, define $f[A] := \bigl[f(a_{ij})\bigr]_{i,j = 1}^n$. A longstanding problem in matrix analysis is to characterize the functions for which
\[
  A\in\mathbb{P}_n(I) \quad\Longrightarrow\quad f[A]\succeq0,
\]
where $\succeq$ denotes the usual positive semidefinite Loewner order. The question goes back at least to Pólya and Szegő in 1925 \cite[Problem~37]{PolyaSzego} and arises naturally in the study of positive definite kernels, metric embeddings, moment problems, harmonic analysis, and high-dimensional statistics; see the surveys \cite{BGKP-SurveyI,BGKP-SurveyII}, the monograph \cite{KhareMatrixAnalysis} and the references therein for more details.

Schur's product theorem \cite{Schur} gives an immediate and fundamental family of such functions. If $f(x) = \sum_{k\geq0}c_kx^k$ is such that $c_k\geq 0$ for all $k\geq 0$, then
\[
  f[A] = \sum_{k\geq0}c_kA^{\circ k}\succeq0,
\]
where $\smash{A^{\circ k} = [a_{ij}^k]_{i,j = 1}^n}$ denotes the $k$th entrywise power. Such functions admitting a power series with nonnegative coefficients are said to be \emph{absolutely monotonic}. When positivity preservation is required in every dimension, this elementary construction is exhaustive. Schoenberg \cite{Schoenberg} proved this under a continuity assumption through his study of positive definite functions on spheres. Rudin \cite{Rudin} later removed continuity using harmonic analysis on the torus, and Vasudeva \cite{Vasudeva} obtained the corresponding result on positive intervals. Thus the dimension-free problem has a complete and rigid answer.

Fixing the dimension changes the problem substantially. Entrywise preservers in dimension $n$ need not be absolutely monotonic, and their Taylor expansions may contain negative coefficients. For example, $1+x-\frac{1}{5}x^2$ preserves positivity on $\mathbb{P}_2((0,1))$. Such examples were discovered in recent work of Belton--Guillot--Khare--Putinar \cite{BeltonGuillotKharePutinar} and Khare--Tao \cite{KhareTao}; see Theorem~\ref{thm:generalized-polynomial-preservers} and Remark~\ref{rem:atomic-christoffel-threshold}. Consequently, the dimension-free classification does not admit a simple truncated version in fixed dimension.

The first general obstruction was discovered by Loewner and developed by Horn~\cite{Horn}: if $f$ is sufficiently smooth and preserves positivity in dimension $n$, then
\[
f^{(k)}(x)\geq0, \qquad 0\leq k\leq n-1.
\]
These inequalities are fundamental, but they do not by themselves provide a characterization. Complete answers were known only in special situations. Vasudeva \cite{Vasudeva} characterized all preservers in dimension two through monotonicity and multiplicative midpoint convexity. FitzGerald and Horn \cite{FitzGeraldHorn} determined exactly which real powers preserve positivity:
\[
  x^\alpha \text{ preserves }\mathbb{P}_n((0,\infty)) \quad\Longleftrightarrow\quad \alpha\in\mathbb{N}_0\cup[n-2,\infty), 
\]
where $\mathbb{N}_0 := \{0\} \cup \mathbb{N}$ denotes the set of nonnegative integers. For arbitrary functions, however, the fixed-dimensional problem has remained open for every $n\geq3$ since Horn's 1969 work with Loewner \cite{Horn}.

Recent work has revealed how much richer the fixed-dimensional cone of preservers is. Belton, Guillot, Khare, and Putinar \cite{BeltonGuillotKharePutinar} obtained sharp coefficient bounds for families of polynomials with a negative term that preserve positivity in fixed dimension. Khare and Tao \cite{KhareTao} determined the possible sign patterns of power series and sums of real powers that preserve positivity, and obtained exact thresholds for a positive sum of powers perturbed by one negative term. Khare's Horn--Loewner theorem \cite{KhareSmooth} further strengthened the known necessary conditions of Horn and Loewner. 

Many other variants were also considered in the literature, involving: structured matrices \cite{BGKP-hankel,GKR-sparse,GuillotKhareRajaratnam}, specific functions \cite{guillot2012retaining,guillot2015complete,GKR-critG, hiai2009monotonicity}, block actions \cite{guillot2015functions,vishwakarma2023positivity}, different notions of positivity \cite{BGKP-tn,belton2026distance, BharaliHoltz, choudhury2025entrywise, guillot2025positivity,guillot2026entrywise, Herz, mashreghi2026functional}, preservation of inertia \cite{belton2023negativity}, and multivariable transforms \cite{belton2023negativity,damase2024multivariate, fitzgerald1995functions}, to name a few. These results provide detailed information on prescribed expansions and special families, but no necessary and sufficient criterion for an arbitrary function to preserve positivity on $\mathbb{P}_n(I)$.

\subsection*{Main results}

The results of this paper provide an explicit characterization of the entrywise  positivity preservers of $\mathbb{P}_n(I)$. We first obtain the characterization in the natural smooth class $C^{2n-2}(I)$, and then remove the regularity assumption entirely via multiplicative regularization and distribution theory. To state our main results, define the \emph{Euler derivative}:
\[
  \mathcal{E} := x\frac{d}{dx}.
\]
Its relevance is transparent following the logarithmic change of variables $g(t) = f(e^t)$: one has $g^{(k)}(t) = \bigl(\mathcal{E}^k f\bigr)(e^t)$. Moreover, rank-one matrices $uu^\top$ become additive Hankel kernels under $g$:
\[
  f(u_i u_j) = g(\log u_i+\log u_j),
\]
which naturally motivates us to consider the \emph{Euler--Hankel matrix}
\[
  \mathcal{H}_n(f;x) := \bigl[\mathcal{E}^{i+j}f(x)\bigr]_{i,j = 0}^{n-1}.
\]

This leads to our main theorem for functions in $C^{2n-2}(I)$. Therein and in what follows, we denote by $\mathbb{P}_n^1(I)$ the set of matrices of rank 1 in $\mathbb{P}_n(I)$.

\begin{theorem}\label{thm:main}
    Fix $n\geq1$, let $0<\rho\leq\infty$, and let $I = (0,\rho)$. Suppose that $f\in C^{2n-2}(I)$. Then the following assertions are equivalent.
    \begin{enumerate}
        \item\label{item:preserver}
        For every $A\in\mathbb{P}_n(I)$, one has $f[A]\succeq0$.
        
        \item\label{item:rank-one-signs}
        For every $R\in\mathbb{P}_n^1(I)$, one has $f[R]\succeq0$, and
        \begin{equation}\label{eq:ordinary-signs}
          f^{(k)}(x)\geq0, \qquad x\in I,\quad 0\leq k\leq n-1.
        \end{equation}
        
        \item\label{item:conditions}
        Condition~\eqref{eq:ordinary-signs} holds and
        \begin{equation}\label{eq:euler-hankel}
          \mathcal{H}_n(f;x) = \bigl[\mathcal{E}^{i+j}f(x)\bigr]_{i,j = 0}^{n-1} \succeq0, \qquad x\in I.
        \end{equation}
    \end{enumerate}
\end{theorem}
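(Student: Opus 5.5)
The plan is to prove \ref{item:preserver}$\Rightarrow$\ref{item:rank-one-signs}$\Rightarrow$\ref{item:conditions}$\Rightarrow$\ref{item:preserver}. The last implication carries essentially all the difficulty.

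\emph{\ref{item:preserver}$\Rightarrow$\ref{item:rank-one-signs}.} Positivity on $\mathbb{P}_n^1(I)$ is a special case of \ref{item:preserver}. The sign conditions \eqref{eq:ordinary-signs} are the Horn--Loewner necessary conditions, which I would quote from \cite{Horn} (or the refined version in \cite{KhareSmooth}). They apply because $f\in C^{2n-2}(I)\subset C^{n-1}(I)$.

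\emph{\ref{item:rank-one-signs}$\Rightarrow$\ref{item:conditions}.} Pass to $g(t)=f(e^t)$. Fix $x=e^{s}\in I$ and points $t_1,\dots,t_n$ close to $0$. Put $u_i=e^{s/2+t_i}$, so that $R=uu^\top\in\mathbb{P}_n^1(I)$. Then $f[R]=[g(s+t_i+t_j)]$ is a positive semidefinite Hankel kernel. Conjugate this matrix by the (invertible) divided-difference matrix $D$ in the nodes $t_1,\dots,t_n$. This gives the matrix of iterated divided differences $[g[s+t_1,\dots,s+t_{i}\,;\,t_1,\dots,t_j]]$, which is still positive semidefinite. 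Now let all nodes coalesce to $0$. Since $g\in C^{2n-2}$, the limit is $[g^{(i+j)}(s)/(i!\,j!)]_{i,j=0}^{n-1}$. Removing the diagonal factorials by congruence gives $[g^{(i+j)}(s)]\succeq0$. This is exactly \eqref{eq:euler-hankel}, because $g^{(k)}(s)=\mathcal{E}^kf(e^s)$.

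\emph{\ref{item:conditions}$\Rightarrow$\ref{item:preserver}.} I would argue by induction on $n$, following the FitzGerald--Horn integration scheme. The case $n=1$ is just $f\ge0$. Given $A\in\mathbb{P}_n(I)$, let $\zeta=a_{\cdot n}/\sqrt{a_{nn}}$ and write $A=\zeta\zeta^\top+B$. Here $B\succeq0$ has vanishing last row and column. The matrices $M_\lambda=\zeta\zeta^\top+\lambda B=(1-\lambda)\zeta\zeta^\top+\lambda A$ are positive semidefinite and, by convexity of $I$, have entries in $I$. Then
\[
  f[A]=f[\zeta\zeta^\top]+\int_0^1 B\circ f'[M_\lambda]\,d\lambda .
\]
By the Schur product theorem, it suffices to prove two things.
\begin{enumerate}
\item[(a)] $f[uu^\top]\succeq0$ for every rank-one $uu^\top\in\mathbb{P}_n^1(I)$.
\item[(b)] $f'$ preserves positivity on $\mathbb{P}_{n-1}(I)$. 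This suffices because $B$ is supported on the leading $(n-1)\times(n-1)$ block.
\end{enumerate}

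For (a) I would reverse the confluent-limit argument. A positive semidefinite $n\times n$ Hankel derivative matrix of $g$ at every point should force the kernel $g(t_i+t_j)$ to be positive semidefinite for arbitrary nodes. My approach would write $g(t_i+t_j)$ via Taylor expansion with integral remainder of order $2n-2$ about a base point, and express the kernel as a positive combination of Gram-type matrices built from $[g^{(i+j)}(\xi)]$. Alternatively, one could use a Hermite--Genocchi representation of divided differences as averages of $g^{(2n-2)}$ against B-spline densities.

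For (b), the induction hypothesis requires $f'$ to satisfy \eqref{eq:ordinary-signs} at level $n-1$, which is immediate, together with $\mathcal{H}_{n-1}(f';x)\succeq0$. This last condition is the \textbf{main obstacle}. Since $\mathcal{E}f=xf'$, we have $\mathcal{H}_{n-1}(f';x)=x^{-1}\,[\mathcal{E}^{i+j}(e^{-t}\,\partial_t g)]$. This is not a shifted Hankel matrix of $g$. Moreover, psd-ness of a shifted Hankel matrix does not follow from that of $\mathcal{H}_n(f;x)$ alone. So the sign conditions $f^{(k)}\ge0$ must be used essentially here.

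My first attempt would be to strengthen the induction hypothesis to condition \ref{item:rank-one-signs}, so that only rank-one positivity of $f'$ in dimension $n-1$ is needed. I would try to obtain this as a limit of difference quotients $(f[uu^\top+h\,vv^\top]-f[uu^\top])/h$, using the rank-one structure together with the monotonicity supplied by $f^{(k)}\ge0$. If that fails, I would instead modify the decomposition of $A$ so that the integrand involves $\mathcal{E}f$ in place of $f'$. For instance, one could integrate along the multiplicative path $\zeta\zeta^\top\circ(A/\zeta\zeta^\top)^{\circ\lambda}$. The Euler--Hankel structure is then preserved by the shift $\mathcal{E}$, and the resulting issue becomes showing that $\mathcal{E}f$ inherits the level-$(n-1)$ conditions.
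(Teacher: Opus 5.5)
\textbf{There is a genuine gap.} Your overall architecture matches the paper's. The FitzGerald--Horn integration identity you write down is exactly the Khare--Tao extension principle (Proposition~\ref{prop:extension}) that the paper invokes, and your confluent divided-difference argument for (ii)$\Rightarrow$(iii) is fine. However, the step you flag as the main obstacle, $\mathcal{H}_{n-1}(f';x)\succeq0$, is left unproved. It is the heart of the theorem, and neither of your fallbacks works.
\begin{itemize}
\item For the difference quotients $(f[uu^\top+hvv^\top]-f[uu^\top])/h$ to have a positive semidefinite limit, you would need $f[uu^\top+hvv^\top]\succeq f[uu^\top]$, i.e.\ Loewner monotonicity. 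The entrywise monotonicity coming from $f^{(k)}\geq0$ does not give this. Moreover, positivity of $f$ on the rank-two matrices $uu^\top+hvv^\top$ is itself what you are trying to prove. Since $f[uu^\top]$ is generally nonsingular, no Horn--Loewner-type first-order extraction is available.
\item The multiplicative path passes through $D\,(D^{-1}AD^{-1})^{\circ\lambda}D$ with $D=\operatorname{diag}(\zeta)$ and $0<\lambda<1$. Such fractional Hadamard powers need not be positive semidefinite for $n\geq3$ (FitzGerald--Horn), so the Schur-product step collapses.
\end{itemize}

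The missing idea is to use the \emph{doubly} shifted Hankel matrix, which, unlike the singly shifted one, is a principal submatrix. The lower-right $(n-1)\times(n-1)$ block of $\mathcal{H}_n(f;e^t)$ is $[g^{(i+j+2)}(t)]_{i,j=0}^{n-2}$, so the kernel $g''(s+t)$ is positive semidefinite of order $n-1$. Next, $g'=e^tf'(e^t)\geq0$ and $g''-g'=e^{2t}f''(e^t)\geq0$, so $g'$ is nondecreasing. Since $g=f\circ\exp\geq0$ on the left-unbounded interval $J$, this forces $g'(t)\to0$ as $t\to-\infty$. Hence $g'(x)=\int_0^\infty g''(x-t)\,\mathrm{d}t$, and
\[
[g'(s_i+s_j)]=\int_0^\infty\bigl[g''((s_i-t/2)+(s_j-t/2))\bigr]\,\mathrm{d}t\succeq0 .
\]
The diagonal congruence $f'(e^{s_i+s_j})=e^{-s_i}e^{-s_j}g'(s_i+s_j)$ then gives rank-one positivity of $f'$ in dimension $n-1$. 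Your confluent limit then yields $\mathcal{H}_{n-1}(f';x)\succeq0$. Only $f,f',f''\geq0$ are used, and your instinct to aim for rank-one positivity of $f'$ was the right target.

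Step (a) is also not established by what you sketch. It is a genuine local-to-global theorem, not a computation.
\begin{itemize}
\item Any Taylor expansion of $g(c+a+b)$ through order $2n-2$ produces terms $a^pb^q$ with $\max(p,q)>n-1$. So it is not of the Gram form $V\,\mathbf{H}_n(g;c)\,V^\top$, and the integral remainder has no definiteness.
\item Hermite--Genocchi averages $g^{(i+j)}$ against different B-spline measures in different entries. So it does not present the divided-difference matrix as a positive mixture of matrices $\mathbf{H}_n(g;\xi)$.
\end{itemize}
The paper obtains (a) from Karlin's differential criterion for kernels (Lemma~\ref{lem:karlin}(ii)), applied to $K(x,y)=g(x+y)$ under $\mathbf{H}_n(g;t)\succ0$. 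It then passes to the semidefinite case by replacing $g$ with $g+\varepsilon\sum_{r=0}^{n-1}e^{rt}$ and letting $\varepsilon\to0^+$ (Corollary~\ref{cor:karlin-psd}). The same criterion is also needed inside the descent step above, to turn $\mathbf{H}_{n-1}(g'';t)\succeq0$ into positivity of the kernel $g''(s+t)$. You would need either to cite this result or to supply an actual proof of it.
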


\begin{remark}
    The smoothness hypothesis $f\in C^{2n-2}(I)$ is precisely the differentiability needed to state these conditions pointwise. For $n\geq2$, we show in Remark~\ref{rem:distributional-tests} that every preserver actually belongs to $C^{2n-4}(I)$. Hence, the hypothesis $f\in C^{2n-2}(I)$ is not far from being optimal. Regardless, in Theorem~\ref{thm:distributional}, we remove this regularity assumption entirely by interpreting the ordinary and Euler--Hankel derivatives as distributions, thus giving a characterization for arbitrary functions $f$ defined on $I$.
\end{remark}

The equivalence of \ref{item:preserver} and \ref{item:rank-one-signs} is the core of Theorem~\ref{thm:main}: once the Horn--Loewner inequalities \eqref{eq:ordinary-signs} are imposed, positivity on rank-one matrices already determines positivity on all matrices of the same dimension. Proposition~\ref{prop:rank-one}, obtained from Karlin's finite-order criterion after the logarithmic change of variables, identifies rank-one preservation with the Euler--Hankel condition and therefore yields the equivalence of \ref{item:rank-one-signs} and \ref{item:conditions}.

The nontrivial implication from rank-one to full preservation is proved by induction on the dimension. An extension principle of Khare and Tao \cite[Theorem~3.5]{KhareTao} reduces it to preservation by $f'$ in dimension $n-1$, while the new Euler--Hankel descent argument supplies the condition needed to close the induction:
\[
  \mathcal{H}_n(f;x)\succeq 0 \quad\Longrightarrow\quad \mathcal{H}_{n-1}(f';x)\succeq 0.
\]

\smallskip
We note a close formal parallel with the local characterizations of matrix monotonicity and convexity of fixed order obtained in \cite[Theorems~5 and~7]{Heinavaara}.  In that setting, positivity of the Loewner or Kraus matrices for arbitrary tuples of points is equivalent to pointwise positivity of the derivative Hankel matrices
\[
  \left[\frac{f^{(i+j-1)}(t)}{(i+j-1)!}\right]_{i,j = 1}^n \quad\text{or}\qquad \left[\frac{f^{(i+j)}(t)}{(i+j)!}\right]_{i,j = 1}^n,
\]
respectively.  Similarly, after the logarithmic change of variables $g(t) = f(e^t)$, our characterization reduces entrywise positivity preservation, together with the ordinary derivative inequalities, to pointwise positivity of
\[
  \bigl[g^{(i+j)}(t)\bigr]_{i,j = 0}^{n-1}.
\]
Thus, in both problems, a global matrix-preservation property of fixed order is encoded by a finite Hankel matrix of derivatives at each point. The two settings involve different functional calculi and different Hankel matrices, however.

\smallskip
Theorem~\ref{thm:main} naturally yields sharp quantitative consequences. Given sufficiently smooth functions $g$ and $h$, with $h$ satisfying the strict conditions \eqref{eq:ordinary-signs} and \eqref{eq:euler-hankel}, Theorem~\ref{thm:sharp-smooth-domination} determines the least constant $K$ such that
\[
  g[A]\preceq Kh[A] \qquad \text{for every }A\in\mathbb{P}_n(I).
\]
This gives a finite-regularity and optimal form of the domination problem considered by Khare and Tao \cite{KhareTao}.

Specializing the above results to positive mixtures of powers, we recover and extend the sharp finite-sum thresholds of Belton--Guillot--Khare--Putinar and Khare--Tao. Finally, this allows us to give a negative answer to the question in \cite[Remark~8.5]{KhareTao}: no fixed finite collection of test matrices whose entries lie strictly inside $(0,\rho)$ can detect positivity preservation for this interval.

\smallskip

The paper is organized as follows. Section~\ref{sec:preliminaries} collects the finite-order Hankel-kernel criterion and the other preliminary results used below. The rank-one characterization and the Euler--Hankel descent theorem are developed in \ref{sec:rank-one} and \ref{sec:descent}. Section~\ref{sec:proof-main} combines these ingredients to prove the smooth characterization and its distributional extension. Section~\ref{sec:examples} derives several classical fixed- and all-dimensional results. Finally, Section~\ref{sec:sharp-domination} provides applications of Theorem~\ref{thm:main}, namely a sharp domination theorem, a specialization to continuous mixtures of powers, and an obstruction to finite interior testing.

\section{Preliminaries}\label{sec:preliminaries}

We begin by collecting several key results that will be used throughout the subsequent sections.

\subsection{Finite-order additive Hankel kernels}

Let $X\subset\mathbb{R}$ be an open interval and $q$ a function defined on $X+X := \{s+t:s,t\in X\}$.  We say that the additive Hankel kernel $q(s+t)$ is \emph{positive semidefinite of order $m$} if
\[
  [q(s_i+s_j)]_{i,j = 1}^r\succeq 0
\]
for every $1\leq r\leq m$ and every $s_1,\dots,s_r\in X$.  Notice that it is enough to test $r = m$, since smaller matrices occur as principal submatrices after repeating or adjoining points.

For $q\in C^{2m-2}(X+X)$, define its derivative Hankel matrix by
\[
  \mathbf{H}_m(q;t) := \bigl[q^{(i+j)}(t) \bigr]_{\smash{i,j = 0}}^{m-1}, 
\]
where $q^{\smash{(0)}} := q$.

\begin{lemma}[Karlin's criterion]\label{lem:karlin}
    Let $X$ be an open interval and consider a function $q\in C^{2m-2}(X+X)$.
    \begin{enumerate}
        \item If the kernel $q(s+t)$ is positive semidefinite of order $m$, then
        \[
          \mathbf{H}_m(q;t)\succeq 0\qquad(t\in X+X).
        \]
        
        \item If $\mathbf{H}_m(q;t)$ is positive definite for every $t\in X+X$, then the matrix $[q(s_i+s_j)]_{i,j = 1}^m$ is positive definite if the $s_i\in X$ are pairwise distinct.
    \end{enumerate}
\end{lemma}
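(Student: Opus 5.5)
Part (i) follows from a local divided-difference argument, and part (ii) from a Taylor expansion combined with a generalized Vandermonde (extended total positivity) argument.

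\emph{Part (i).} Fix $t\in X+X$ and write $t=2s_0$ with $s_0\in X$. This is possible because $X$ is an interval, so $(X+X)/2=X$. Choose distinct points $s_1,\dots,s_m\in X$ close to $s_0$, for instance $s_k=s_0+\varepsilon k$ with $\varepsilon$ small. Let $D$ be the $m\times m$ matrix that sends a vector of values at $s_1,\dots,s_m$ to its divided differences $[s_1],[s_1,s_2],\dots,[s_1,\dots,s_m]$. By hypothesis $Q=[q(s_i+s_j)]\succeq0$, so
\[
DQD^\top=\bigl[\,[s_1,\dots,s_{i+1}]_a[s_1,\dots,s_{j+1}]_b\,q(a+b)\bigr]_{i,j=0}^{m-1}\succeq0 .
\]
Here the divided differences act separately in the two variables $a$ and $b$. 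Since $q\in C^{2m-2}$ and each mixed divided difference uses at most $m-1$ derivatives in each variable, the Hermite--Genocchi formula lets us pass to the limit $\varepsilon\to0$. The $(i,j)$ entry tends to $\partial_a^i\partial_b^j q(a+b)/(i!\,j!)$ evaluated at $a=b=s_0$, which equals $q^{(i+j)}(t)/(i!\,j!)$. The limit matrix is therefore $\Delta\,\mathbf H_m(q;t)\,\Delta$ with $\Delta=\mathrm{diag}(1/i!)$, and it is PSD because the PSD cone is closed. Conjugating by $\Delta^{-1}$ gives $\mathbf H_m(q;t)\succeq0$.

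\emph{Part (ii).} This is the substantive direction, and I would argue it by contradiction through a continuity/deformation argument.

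1. \emph{Nondegeneracy near the diagonal.} Reverse the computation in part (i). For any base point $s_0$ and distinct $s_i=s_0+\varepsilon\tau_i$, we have
\[
Q=D^{-1}\bigl(\Delta\mathbf H_m\Delta+o(1)\bigr)D^{-\top}.
\]
Since $\mathbf H_m(q;2s_0)\succ0$, the matrix $Q$ is positive definite for all sufficiently small $\varepsilon$, uniformly for $\tau$ in compact sets of distinct configurations normalized to unit diameter.

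2. \emph{Deformation.} Take a given distinct configuration $s_1<\dots<s_m$ and contract it homotopically toward its midpoint: $s_i(\lambda)=c+\lambda(s_i-c)$ for $\lambda\in(0,1]$. The points stay inside $X$ by convexity, and they remain pairwise distinct. By step 1, $\det Q(\lambda)>0$ for small $\lambda$. If $Q(1)$ were not positive definite, continuity would give a first $\lambda^*$ at which $\det Q(\lambda^*)=0$, so $Q(\lambda^*)$ is PSD and singular, with a nonzero kernel vector $c$. Then
\[
\sum_{i,j}c_ic_jq(s_i+s_j)=0,
\]
and $\sum_jc_jq(s_i+s_j)=0$ for every $i$.

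3. \emph{Contradiction, the main obstacle.} We must show that a PSD singular kernel matrix at distinct nodes is impossible when $\mathbf H_m\succ0$ everywhere. My first attempt would be this. The function $\phi(a)=\sum_jc_jq(a+s_j)$ vanishes at the $m$ distinct nodes $s_i$. I would then use the fact that singularity persists along a perturbation direction: move the nodes and use minimality of $\lambda^*$, so that $\det Q\ge0$ nearby. The first-order variation in any single node, given by $2c_i\phi'(s_i)c_i$-type terms, must then vanish, forcing $\phi'(s_i)=0$ wherever $c_i\neq0$. That yields $2m$ zeros counted with multiplicity for a combination of translates.

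Alternatively, I would establish the result first for the dense class where $q$ is an exponential sum $\sum\mu_k e^{\theta_k t}$ approximating $q$ locally, which is extended totally positive. I expect making this zero-counting rigorous, or replacing it by Karlin's extended total positivity theory for Hankel kernels, to be the hardest step. Failing a self-contained argument, I would cite Karlin's \emph{Total Positivity}, whose results on kernels $K(s,t)=q(s+t)$ state that $\mathbf H_m\succ0$ is equivalent to strict positivity of $\det[q(s_i+s_j)]$ on ordered distinct nodes. Positive definiteness then follows from Sylvester's criterion applied to all leading principal minors, each of which is such a determinant of order at most $m$, with $\mathbf H_r\succ0$ for $r\le m$ as principal submatrices.
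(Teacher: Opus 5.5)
Your part (i) is the paper's divided-difference argument. Your fallback for part (ii) is also exactly what the paper does, since the paper too proves (ii) only by citation: apply Karlin's extended-total-positivity theorem (\emph{Total Positivity}, Ch.~2, Thm.~2.6) to $K(s,t)=q(s+t)$, whose hypotheses $\det\mathbf{H}_p(q;s+t)>0$ for $p\leq m$ follow from $\mathbf{H}_m\succ0$, and then use Sylvester's criterion at ordered nodes. Drop the deformation sketch rather than trying to repair it. Minimality of $\lambda^*$ along one contraction path gives no sign control of $\det Q$ under arbitrary node perturbations, and bounding the zeros of $\phi$ is itself the Chebyshev-type property of the translates $q(\cdot+s_j)$ that Karlin's theorem supplies. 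Note also that Karlin's result gives only the implication $\mathbf{H}_m\succ0\Rightarrow$ strict positivity of the minors, not the equivalence you state; that implication is all you need.
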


The necessity (i) follows by replacing point evaluations with divided differences and taking the limit as the nodes get closer to obtain the derivatives. The strict sufficiency in (ii) is a special case of Karlin's differential criterion for kernels; see \cite[Chapter~2, Theorem~2.6]{Karlin}. A convenient recent statement for additive Hankel kernels also appears in \cite[Lemma~4.1]{BodirskyKummerThom}, stated for $X = Y = \mathbb{R}$.

We will later need the following non-strict converse of Karlin's criterion, which follows from a simple regularization.

\begin{corollary}[Semidefinite Karlin criterion]\label{cor:karlin-psd}
    Under the hypotheses of Lemma~\ref{lem:karlin}, if $\mathbf{H}_m(q;t)\succeq 0$ for every $t\in X+X$, then the kernel $q(s+t)$ is positive semidefinite of order $m$.
\end{corollary}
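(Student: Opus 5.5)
We will perturb $q$ by a small kernel whose derivative Hankel matrix is positive definite everywhere, apply Lemma~\ref{lem:karlin}(ii) to the perturbation, and then let the perturbation tend to zero. The natural choice is $q_\varepsilon(t) := q(t)+\varepsilon\,\psi(t)$ with
\[
  \psi(t) := \int_0^1 e^{\lambda t}\,d\lambda ,
\]
a positive mixture of exponentials. For every $t$,
\[
  \mathbf{H}_m(\psi;t) = \int_0^1 e^{\lambda t}\,v(\lambda)v(\lambda)^\top\,d\lambda, \qquad v(\lambda) = (1,\lambda,\dots,\lambda^{m-1})^\top .
\]
This matrix is positive definite. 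Indeed, $c^\top \mathbf{H}_m(\psi;t)c = \int_0^1 e^{\lambda t}\,p_c(\lambda)^2\,d\lambda$, where $p_c(\lambda)=\sum_i c_i\lambda^i$, and a nonzero polynomial cannot vanish on $[0,1]$. Moreover $\psi$ is smooth on all of $\mathbb{R}$. Hence $q_\varepsilon\in C^{2m-2}(X+X)$ and
\[
  \mathbf{H}_m(q_\varepsilon;t) = \mathbf{H}_m(q;t)+\varepsilon\,\mathbf{H}_m(\psi;t) \succ 0
\]
for every $t\in X+X$ and every $\varepsilon>0$, using the hypothesis $\mathbf{H}_m(q;t)\succeq0$.

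By Lemma~\ref{lem:karlin}(ii), for pairwise distinct $s_1,\dots,s_m\in X$ the matrix $[q_\varepsilon(s_i+s_j)]_{i,j=1}^m$ is positive definite. Letting $\varepsilon\to0$ entrywise, we get $[q(s_i+s_j)]_{i,j=1}^m\succeq0$ for all pairwise distinct nodes, since the positive semidefinite cone is closed.

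It remains to remove the distinctness requirement and pass to all orders $r\le m$. For $r\le m$ and arbitrary $s_1,\dots,s_r\in X$ (possibly repeated), choose pairwise distinct perturbations $s_i^{(k)}\to s_i$ in $X$. This is possible because $X$ is open. Then $[q(s^{(k)}_i+s^{(k)}_j)]_{i,j=1}^r$ is positive semidefinite. Indeed, it is a principal submatrix of an $m\times m$ matrix built on $m$ distinct nodes, obtained by adjoining $m-r$ further distinct points of $X$, which exist since $X$ is an infinite interval. By continuity of $q$, the limit $[q(s_i+s_j)]_{i,j=1}^r\succeq0$. Hence the kernel is positive semidefinite of order $m$.

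The only step requiring care is verifying that $\mathbf{H}_m(\psi;t)$ is strictly positive definite everywhere. The integral representation above makes this transparent: the quadratic form is the integral of $e^{\lambda t}p_c(\lambda)^2$ over $[0,1]$. The limiting arguments are routine because $q$ is continuous and the positive semidefinite cone is closed.
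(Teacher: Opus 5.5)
Your proposal is correct and follows essentially the same route as the paper: add $\varepsilon$ times a positive mixture of exponentials whose derivative Hankel matrix is positive definite, apply Lemma~\ref{lem:karlin}(ii), then pass to the limit in the nodes and in $\varepsilon$. The only difference is that the paper uses the discrete mixture $p_m(t)=\sum_{r=0}^{m-1}e^{rt}$, with positive definiteness coming from the invertibility of a Vandermonde matrix, where you use the continuous mixture $\int_0^1 e^{\lambda t}\,d\lambda$.
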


\begin{proof}
Define
\[
  p_m(t) := \sum_{r = 0}^{m-1}e^{rt}, \qquad v_r := (1,r,\dots,r^{m-1})^\top.
\]
A direct computation reveals that
\[
  \mathbf{H}_m(p_m;t) = \sum_{r = 0}^{m-1}e^{rt}v_rv_r^\top.
\]
Since the matrix with columns $v_0,\ldots,v_{m-1}$ is a Vandermonde matrix, these vectors are linearly independent. Therefore, every nonzero $z\in\mathbb R^m$ satisfies
\[
  z^\top \mathbf{H}_m(p_m;t)z = \sum_{r = 0}^{m-1} e^{rt}(v_r^\top z)^2 > 0.
\]
Indeed, equality would imply that $v_r^\top z = 0$ for every $0\leq r\leq m-1$, and hence $z = 0$. Thus $\mathbf{H}_m(p_m;t)$ is positive definite for every $t$. Therefore, $\mathbf{H}_m(q+\varepsilon p_m;t)$ is positive definite for every $\varepsilon>0$.  Lemma~\ref{lem:karlin} gives the desired kernel inequality for $q+\varepsilon p_m$.  Repeated nodes follow by continuity, and then letting $\varepsilon \to 0^+$ gives the assertion for $q$.
\end{proof}

Although it is not used in this paper, we note that the Euler derivatives may be written in terms of ordinary derivatives as
\[
  \mathcal{E}^k f(x) = \sum_{\ell = 0}^k \genfrac{\{}{\}}{0pt}{}{k}{\ell} x^\ell f^{(\ell)}(x),
\]
where the braces denote Stirling numbers of the second kind.  Thus \eqref{eq:euler-hankel} is an explicit finite family of ordinary differential inequalities.

\subsection{Two results on entrywise preservers}

We record two existing results that will play complementary roles in the proof of Theorem~\ref{thm:main}. The first shows that entrywise preservers always have a number of nonnegative derivatives on $I$, while the second provides an inductive mechanism for passing from rank-one matrices to matrices of arbitrary rank.

The first result is the classical smooth Horn--Loewner necessary condition.

\begin{theorem}[Horn--Loewner]\label{thm:horn-loewner}
    Let $n\geq2$ and $f\in C^{n-1}(I)$.  If $f[-]$ preserves positivity on
    $\mathbb{P}_n(I)$, then
    \[
      f^{(k)}(x)\geq0, \qquad x\in I,\quad 0\leq k\leq n-1.
    \]
\end{theorem}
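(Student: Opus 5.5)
We prove the classical Horn--Loewner statement by the standard rank-two perturbation argument, arguing by induction on $k$.

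Fix $x\in I$ and choose $\varepsilon>0$ small enough that $[x-\varepsilon,x+\varepsilon]\subset I$. For a vector $u\in\mathbb{R}^n$ with distinct entries and $t>0$ small, consider
\[
  A(t) := x\,\mathbf{1}\mathbf{1}^\top + t\,uu^\top ,
\]
where $\mathbf{1}$ is the all-ones vector. This matrix is positive semidefinite, and its entries $x+tu_iu_j$ lie in $I$ for $t$ small. Hence $f[A(t)]\succeq0$.

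\textbf{Step 1: the quadratic form.} Take any vector $v$ orthogonal to $\mathbf{1},u,\dots,u^{\circ(k-1)}$, where $u^{\circ j}$ denotes the entrywise power. By Taylor's theorem with remainder, using $f\in C^{n-1}$ and $k\le n-1$,
\[
  v^\top f[A(t)]v = \sum_{j=0}^{k}\frac{f^{(j)}(x)t^j}{j!}\,\bigl(v^\top u^{\circ j}\bigr)^2 + o(t^k).
\]
This holds because $(uu^\top)^{\circ j} = u^{\circ j}(u^{\circ j})^\top$. The terms with $j<k$ vanish by the choice of $v$, so
\[
  0\le v^\top f[A(t)]v = \frac{f^{(k)}(x)t^k}{k!}\,\bigl(v^\top u^{\circ k}\bigr)^2 + o(t^k).
\]

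\textbf{Step 2: choosing $v$.} Since $k\le n-1$ and the entries of $u$ are distinct, the vectors $\mathbf{1},u,\dots,u^{\circ k}$ are linearly independent; they are columns of a Vandermonde matrix. So we can choose $v$ orthogonal to the first $k$ of them with $v^\top u^{\circ k}\neq0$. Dividing by $t^k$ and letting $t\to0^+$ gives $f^{(k)}(x)\ge0$.

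The case $k=0$ follows by taking $v$ a standard basis vector: a diagonal entry $f(x)$ of the psd matrix $f[x\mathbf{1}\mathbf{1}^\top]$ is nonnegative. No induction is actually needed, since each $k$ is handled directly.

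\textbf{Main obstacle.} The only delicate point is justifying the Taylor expansion with an $o(t^k)$ remainder when $k=n-1$ and only $f\in C^{n-1}$ is assumed. This is resolved by Taylor's theorem with Peano remainder, which requires exactly $k$ continuous derivatives. The remainder is uniform over the finitely many entries, so the quadratic form inherits the $o(t^k)$ bound.
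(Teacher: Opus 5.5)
Your argument is correct, and it takes a different route from the one the paper relies on. The paper does not reprove this theorem. It defers to Horn and to Khare's formulation, which rest on Loewner's determinantal argument: one shows $\det f[x\mathbf{1}\mathbf{1}^\top+tuu^\top]=t^{\binom{n}{2}}V(u)^2\prod_{k=0}^{n-1}\frac{f^{(k)}(x)}{k!}+o\bigl(t^{\binom{n}{2}}\bigr)$ as $t\to0^+$. A determinant, or a leading principal minor, only controls a product of derivatives. That route therefore needs an extra step to separate the individual signs, typically an induction on the size combined with a perturbation $f+\varepsilon h$ by a known preserver $h$ with positive derivatives. This handles points where some $f^{(k)}(x)$ vanishes.

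Your test vector $v\perp\mathbf{1},u,\dots,u^{\circ(k-1)}$ with $v^\top u^{\circ k}\neq0$ avoids all of this. It annihilates the lower-order terms of $f[A(t)]=\sum_{\ell=0}^{k}\frac{f^{(\ell)}(x)t^\ell}{\ell!}u^{\circ \ell}(u^{\circ \ell})^\top+o(t^k)$ and isolates $f^{(k)}(x)$ directly. Each $k$ is treated independently, with no determinant asymptotics and no perturbation.

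Your argument also uses less than the statement assumes. For the $k$th inequality only the existence of $f^{(k)}(x)$ is needed, since Peano's remainder does not require continuity of $f^{(k)}$. It also uses only the one-parameter family $x\mathbf{1}\mathbf{1}^\top+tuu^\top$ with a single fixed $u$ having distinct entries, so you obtain the strengthened test-family version as well.

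In exchange, the determinantal approach gives quantitative information on $\det f[x\mathbf{1}\mathbf{1}^\top+tuu^\top]$ itself, refined in Khare's master theorem to a Schur-polynomial expansion. It also applies when only minors, rather than quadratic forms, are known to be nonnegative.

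Two cosmetic points: your first sentence announces an induction on $k$ that you later rightly say is unnecessary. The letter $j$ also serves both as the matrix index in $x+tu_iu_j$ and as the Taylor index.
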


See Horn \cite{Horn} and the formulation and proof in \cite[Theorem~1.1]{KhareSmooth}. These inequalities supply the ordinary derivative conditions in Theorem~\ref{thm:main}. They do not, by themselves, characterize positivity preservation; the additional information will come from the behavior of $f$ on rank-one matrices.

For the converse direction, we use the following extension principle of Khare and Tao \cite[Theorem~3.5]{KhareTao}. 

\begin{proposition}[Extension principle]\label{prop:extension}
    Let $n\geq2$, $I = (0,\rho)$, and $q\in C^1(I)$.  Suppose that
    \begin{enumerate}
        \item $q[R]\succeq 0$ for all $R\in\mathbb{P}_n^1(I)$;
        \item $q'[C]\succeq 0$ for all $C\in\mathbb{P}_{n-1}(I)$.
    \end{enumerate}
    Then $q[A]\succeq 0$ for all $A\in\mathbb{P}_n(I)$.
\end{proposition}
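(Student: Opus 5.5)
The plan is to write an arbitrary $A\in\mathbb{P}_n(I)$ as a rank-one matrix in $\mathbb{P}_n^1(I)$ plus a positive semidefinite remainder that vanishes in the last row and column. We then expand $q$ along the segment joining the two by the fundamental theorem of calculus. Hypothesis (i) controls the rank-one term. Hypothesis (ii), combined with Schur's product theorem, controls the integral remainder.

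\emph{Step 1 (rank-one splitting).} Fix $A = [a_{ij}]\in\mathbb{P}_n(I)$. Since $a_{nn}\in I$, we have $a_{nn}>0$. Set $\zeta := a_{nn}^{-1/2}(a_{1n},\dots,a_{nn})^\top$ and $B := A-\zeta\zeta^\top$. Then $B$ is the Schur complement construction: it is positive semidefinite, and its last row and column vanish. We must check that $\zeta\zeta^\top\in\mathbb{P}_n^1(I)$:
\begin{itemize}
  \item Positivity of the entries: $\zeta_i\zeta_j = a_{in}a_{jn}/a_{nn}>0$ because all entries of $A$ are positive.
  \item Upper bound: the $2\times2$ principal minors of $A$ give $\zeta_i^2 = a_{in}^2/a_{nn}\leq a_{ii}<\rho$. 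Hence $\zeta_i\zeta_j\leq\max(\zeta_i^2,\zeta_j^2)<\rho$.
\end{itemize}
So hypothesis (i) yields $q[\zeta\zeta^\top]\succeq0$.

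\emph{Step 2 (integral identity).} Since $q\in C^1(I)$, we have entrywise
\[
  q(a_{ij}) = q(\zeta_i\zeta_j) + b_{ij}\int_0^1 q'\bigl(\zeta_i\zeta_j+\lambda b_{ij}\bigr)\,d\lambda .
\]
The argument $\zeta_i\zeta_j+\lambda b_{ij} = (1-\lambda)\zeta_i\zeta_j+\lambda a_{ij}$ is a convex combination of two points of $I$, so it lies in $I$. The integrand is therefore continuous in $\lambda$. In matrix form this reads
\[
  q[A] = q[\zeta\zeta^\top] + \int_0^1 B\circ q'\bigl[\zeta\zeta^\top+\lambda B\bigr]\,d\lambda .
\]

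\emph{Step 3 (positivity of the remainder).} Because $B$ vanishes outside its leading $(n-1)\times(n-1)$ block $B'$, the Hadamard product $B\circ q'[\zeta\zeta^\top+\lambda B]$ is zero outside that block. On that block it equals $B'\circ q'[C_\lambda]$, where $C_\lambda$ is the leading $(n-1)\times(n-1)$ block of $\zeta\zeta^\top+\lambda B$. This $C_\lambda$ is positive semidefinite, as a sum of positive semidefinite matrices, and has entries in $I$ by Step 2. So $C_\lambda\in\mathbb{P}_{n-1}(I)$, and hypothesis (ii) gives $q'[C_\lambda]\succeq0$. Schur's product theorem then gives $B'\circ q'[C_\lambda]\succeq0$ for each $\lambda\in[0,1]$. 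An integral of a continuous family of positive semidefinite matrices is positive semidefinite, so the remainder term is $\succeq0$. Adding the rank-one term from Step 1 gives $q[A]\succeq0$.

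The only delicate point is Step 1: the rank-one matrix must have entries \emph{strictly} inside $I$, and every point on the segment to $A$ must stay in $I$. This is exactly where the positivity of the entries of $A$ and the $2\times2$ minor bound are used. Once that is verified, the remaining steps are formal.
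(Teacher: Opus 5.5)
Your proof is correct and is essentially the standard argument: the paper does not reprove this proposition but cites Khare--Tao \cite[Theorem~3.5]{KhareTao}, whose proof is the FitzGerald--Horn integration argument you give. That argument splits $A$ as $\zeta\zeta^\top$ (built from the last column) plus the Schur complement $B$, writes $q[A]-q[\zeta\zeta^\top]=\int_0^1 B\circ q'[\lambda A+(1-\lambda)\zeta\zeta^\top]\,d\lambda$, and applies hypothesis (ii) with the Schur product theorem on the leading $(n-1)\times(n-1)$ block. Your check that $\zeta\zeta^\top$ and the whole segment to $A$ stay in $I$, via $a_{in}^2\leq a_{ii}a_{nn}$ and convexity of $I$, correctly handles the one point needing care.
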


Together, Theorem~\ref{thm:horn-loewner} and Proposition~\ref{prop:extension} reduce the main problem to two questions: how to characterize positivity preservation on rank-one matrices, and how to pass the resulting condition from $f$ to $f'$. The first question is answered by the additive Hankel-kernel criterion, while the second is the content of the Euler--Hankel descent theorem proved below.

\section{Rank-one preservation and Euler--Hankel descent}\label{sec:euler-hankel}

\subsection{The rank-one problem}\label{sec:rank-one}

Let $0<\rho\leq\infty$, and $I = (0,\rho)$. Define
\[
  J := (-\infty,\log\rho),\qquad 
  X := (-\infty,\tfrac12\log\rho),\qquad
  g(t) := f(e^t),
\]
with the convention $\log\infty := \infty$.  Then $X+X = J$ and
\begin{equation}\label{eq:euler-log-identity}
  g^{(k)}(t) = \mathcal{E}^kf(e^t),\qquad k\geq0.
\end{equation}
Moreover, for clarity, we write
\[
  \mathcal{H}_n(f;e^t) := \mathbf{H}_n(g;t).
\]
The following connects the Euler--Hankel condition and positivity preservers on rank-one matrices.

\begin{proposition} \label{prop:rank-one}
    Let $n\geq 1$ and $f\in C^{2n-2}(I)$.  The following are equivalent.
    \begin{enumerate}
      \item $f[R]\succeq 0$ for all $R\in\mathbb{P}_n^1(I)$.
      \item $\mathcal{H}_n(f;x)\succeq 0$ for all $x\in I$.
    \end{enumerate}
\end{proposition}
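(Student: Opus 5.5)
The plan is to translate rank-one preservation into positivity of an additive Hankel kernel and then apply Lemma~\ref{lem:karlin}(i) for one direction and Corollary~\ref{cor:karlin-psd} for the other. Take $q := g = f\circ\exp$ on $J = X+X$ with $m = n$. Since $\exp$ is smooth, $f\in C^{2n-2}(I)$ gives $g\in C^{2n-2}(J)$. By \eqref{eq:euler-log-identity} we have $\mathbf{H}_n(g;t) = \mathcal{H}_n(f;e^t)$, and $t\mapsto e^t$ maps $J$ onto $I$. Hence condition (ii) is equivalent to $\mathbf{H}_n(g;t)\succeq0$ for all $t\in J$.

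The next step identifies $\mathbb{P}_n^1(I)$ with Hankel matrices of $g$. A rank-one positive semidefinite matrix is $R = uu^\top$, and the entries $u_iu_j$ must all be positive. In particular $u_1^2>0$ and $u_1u_j>0$, so every $u_j$ has the sign of $u_1$. Replacing $u$ by $-u$ if needed, we may take $u\in(0,\infty)^n$. The constraint $u_iu_j<\rho$ for all $i,j$ is equivalent to $\max_i u_i^2<\rho$, that is, $s_i := \log u_i\in X = (-\infty,\tfrac12\log\rho)$. Then $f[R] = [g(s_i+s_j)]_{i,j=1}^n$. Conversely, every $s\in X^n$ yields such an $R$. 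So (i) says exactly that $[g(s_i+s_j)]\succeq0$ for all $s_1,\dots,s_n\in X$, with repetitions allowed.

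This is equivalent to the kernel $g(s+t)$ being positive semidefinite of order $n$ in the sense of the preliminaries. Smaller sizes $r<n$ are principal submatrices obtained by repeating a node. Then (i)$\Rightarrow$(ii) is Lemma~\ref{lem:karlin}(i), and (ii)$\Rightarrow$(i) is Corollary~\ref{cor:karlin-psd}; both apply with $q = g$, $m = n$, and the interval $X$. The case $n=1$ is trivial: (ii) reads $f\ge0$, and (i) reads $f(u^2)\geq0$.

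The only real care needed is the bookkeeping: checking that the interval $X$ gives exactly $X+X=J$, including $\rho=\infty$, and that repeated nodes are allowed. Corollary~\ref{cor:karlin-psd} already handles the repeated-node case by continuity, so no step should be a serious obstacle.
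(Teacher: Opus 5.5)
Your proposal is correct and follows the paper's proof essentially verbatim: pass to $g=f\circ\exp$, identify $\mathbb{P}_n^1(I)$ with the Hankel matrices $[g(s_i+s_j)]_{i,j=1}^n$ for $s_i\in X$, and invoke Corollary~\ref{cor:karlin-psd} for (ii)$\Rightarrow$(i) and Lemma~\ref{lem:karlin}(i) for (i)$\Rightarrow$(ii). Your extra bookkeeping is sound and only makes explicit what the paper leaves implicit. This includes the observation that $u_iu_j<\rho$ for all $i,j$ exactly when $\max_i u_i^2<\rho$, and the fact that every $s\in X^n$ arises from some $R\in\mathbb{P}_n^1(I)$.
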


\begin{proof}
Suppose first that (ii) holds.  By \eqref{eq:euler-log-identity}, $\mathbf{H}_n(g;t)\succeq 0$ on $J$. Hence, Corollary~\ref{cor:karlin-psd} shows that $g(s+t)$ is positive semidefinite of order $n$ on $X$.

Now, let $R\in\mathbb{P}_n^1(I)$ and write $R = uu^\top$.  Since the entries of $R$ are positive, the coordinates of $u$ have the same sign. Replacing $u$ by $-u$ if necessary, assume $u_i>0$.  Moreover, $u_i^2<\rho$, so $s_i := \log u_i$ belongs to $X$.  Therefore
\[
  f[R] = [f(u_iu_j)]_{i,j = 1}^n = [g(s_i+s_j)]_{i,j = 1}^n \succeq 0.
\]
Conversely, rank-one preservation gives the last matrix inequality for every $s_1,\dots,s_n\in X$.  The necessary half of Lemma~\ref{lem:karlin}, followed by \eqref{eq:euler-log-identity}, yields $\mathcal{H}_n(f;x)\succeq 0$ for every $x\in I$.
\end{proof}

\subsection{Descent under differentiation}\label{sec:descent}

The next theorem provides the main technical ingredient for the proof of our main result. As we see, the fact that $J = (-\infty,\log\rho)$ is unbounded to the left is essential in its proof.

\begin{theorem}[Euler--Hankel descent]\label{thm:descent}
    Let $m\geq2$, let $I = (0,\rho)$, and suppose $f\in C^{2m}(I)$ satisfies $f,f',f''\geq0$ on $I$.  If
    \[
      \mathcal{H}_{m+1}(f;x)\succeq 0\qquad(x\in I),
    \]
    then
    \begin{equation}\label{eq:descent-output}
      \mathcal{H}_m(f';x)\succeq 0\qquad(x\in I).
    \end{equation}
\end{theorem}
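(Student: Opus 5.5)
The plan is to pass to the logarithmic variable, reduce \eqref{eq:descent-output} to the positivity of a \emph{shifted} derivative Hankel matrix of $g(t) = f(e^t)$, and obtain that shifted positivity from a monotonicity argument that uses that $J = (-\infty,\log\rho)$ is unbounded on the left.

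\emph{Step 1 (reduction).} Put $h(t) := f'(e^t) = e^{-t}g'(t)$, so that $\mathcal{H}_m(f';e^t) = \mathbf{H}_m(h;t)$ and $h\in C^{2m-1}(J)$. Suppose we know that
\[
  \mathbf{H}_m(g';t) = \bigl[g^{(i+j+1)}(t)\bigr]_{i,j=0}^{m-1}\succeq0 \qquad (t\in J).
\]
Then Corollary~\ref{cor:karlin-psd}, applied with $q = g'\in C^{2m-1}$, shows that the kernel $g'(s+t)$ is positive semidefinite of order $m$ on $X$. Since
\[
  h(s_i+s_j) = e^{-s_i}e^{-s_j}g'(s_i+s_j),
\]
the matrix $[h(s_i+s_j)]$ is a diagonal congruence of a positive semidefinite matrix, so the kernel $h(s+t)$ is also positive semidefinite of order $m$. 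Part (i) of Lemma~\ref{lem:karlin} then gives $\mathbf{H}_m(h;t)\succeq0$, which is \eqref{eq:descent-output}. So everything reduces to proving $\mathbf{H}_m(g';t)\succeq 0$ on $J$.

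\emph{Step 2 (operator reformulation).} For a polynomial $p(x) = \sum_{i<m} z_ix^i$ with $z\in\mathbb{R}^m$, write $D = d/dt$ and set
\[
  \varphi_p := p(D)^2g = z^\top\mathbf{H}_m(g;\cdot)\,z .
\]
Then $z^\top\mathbf{H}_m(g';t)z = \varphi_p'(t)$. Hence the goal is to show that every $\varphi_p$ with $\deg p\leq m-1$ is nondecreasing on $J$.

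The hypothesis $\mathbf{H}_{m+1}(g;t)\succeq0$ says that $q(D)^2g\geq0$ for every $q$ with $\deg q\leq m$. Take $q(x) = (x-\lambda)p(x)$. This gives
\[
  \varphi_p''-2\lambda\varphi_p'+\lambda^2\varphi_p\geq0 \qquad \text{for all }\lambda\in\mathbb{R}.
\]
From this we read off three facts: $\varphi_p\geq0$ (leading coefficient in $\lambda$), $\varphi_p''\geq0$ (take $\lambda=0$), and $(\varphi_p')^2\leq\varphi_p\varphi_p''$ (discriminant). Thus $\varphi_p$ is nonnegative, convex and log-convex on $J$.

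A convex function on a left-unbounded interval that is bounded on some $(-\infty,t_0]$ must be nondecreasing there. Indeed, if $\varphi_p'(t_1)<0$, then $\varphi_p'\leq\varphi_p'(t_1)$ on $(-\infty,t_1]$, which forces at least linear growth as $t\to-\infty$. So the whole issue is to prove that each $\varphi_p$ is bounded near $-\infty$.

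\emph{Step 3 (boundedness at $-\infty$; the main obstacle).} By Cauchy--Schwarz on the positive semidefinite matrix $\mathbf{H}_m(g;t)$,
\[
  |g^{(i+j)}|\leq\sqrt{g^{(2i)}g^{(2j)}} .
\]
So it suffices to bound the diagonal terms $\psi_k := g^{(2k)}$, $0\leq k\leq m-1$, on $(-\infty,t_0]$. Each $\psi_k$ equals $\varphi_{x^k}$, so by Step 2 it is nonnegative, convex and log-convex. We proceed by induction on $k$.

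\begin{enumerate}
  \item \emph{Base case.} Since $f\geq0$ and $f'\geq0$, we have $0\leq\psi_0 = g\leq f(e^{t_0})$ on $(-\infty,t_0]$.
  \item \emph{Passing from $\psi_k$ to $\psi_{k+1}$.} If $\psi_k$ is bounded and convex there, then it is nondecreasing and $\psi_k'(t)\to0$ as $t\to-\infty$. Therefore
  \[
    \int_{-\infty}^{t_0}\psi_{k+1} = \int_{-\infty}^{t_0}\psi_k'' = \psi_k'(t_0)<\infty .
  \]
  \item \emph{Integrability implies boundedness.} A log-convex nonnegative function that is integrable on $(-\infty,t_0]$ cannot have $(\log\psi_{k+1})'<0$ anywhere, since that would force exponential growth as $t\to-\infty$. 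Hence $\psi_{k+1}$ is nondecreasing, and so bounded on $(-\infty,t_0]$. (If $\psi_{k+1}$ vanishes somewhere, log-convexity in the $\lambda$-form above still forces it to vanish to the left, so the conclusion persists.)
\end{enumerate}

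This induction runs for $k+1\leq m-1$, which is exactly the range where log-convexity is available, since it uses $\deg\bigl((x-\lambda)x^{k}\bigr)\leq m$. Consequently every $\varphi_p$ with $\deg p\leq m-1$ is bounded on $(-\infty,t_0]$, hence nondecreasing by Step 2. This yields $\mathbf{H}_m(g';t)\succeq0$ and closes the argument via Step 1.

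I expect this growth control at $-\infty$ to be the delicate point. In particular, I need to check that the regularity $f\in C^{2m}$ suffices for the differentiations used, and that the degenerate (zero) cases of the log-convexity argument are handled correctly. It appears that $f''\geq0$ is not needed in this route; I would keep it in reserve in case the boundary behaviour requires it.
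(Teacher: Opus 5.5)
Your proof is correct, and it takes a genuinely different route from the paper's.

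\textbf{The paper's route.} The paper argues at the level of kernels. It reads $\mathbf{H}_m(g'';t)\succeq0$ off the lower-right block and turns it into positivity of the kernel $g''(s+t)$ by the semidefinite Karlin criterion. It then proves $\lim_{t\to-\infty}g'(t)=0$ and writes $[g'(s_i+s_j)]$ as the positive mixture $\int_0^\infty[g''((s_i-t/2)+(s_j-t/2))]\,\mathrm{d}t$ of left translates. It finishes with the diagonal congruence and the necessary half of Lemma~\ref{lem:karlin}.

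\textbf{Your route.} You stay pointwise. Quadratic forms of $\mathbf{H}_m(g';t)$ are derivatives of the convex functions $\varphi_p=p(D)^2g$, and $\varphi_p'\geq0$ follows once $\varphi_p$ is bounded near $-\infty$. You obtain that boundedness along the diagonal $g^{(2k)}$ by induction and spread it to all entries by Cauchy--Schwarz.

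\textbf{What each buys.} The kernel viewpoint is more economical: it needs the asymptotics of the single function $g'$, because Karlin's theorem absorbs all higher derivatives. Your route must control every $g^{(j)}$, $j\leq 2m-2$, near $-\infty$, but in exchange it is completely elementary. In fact Step~1 can also avoid Karlin. From $h^{(k)}=e^{-t}(D-1)^kg'$ one gets $\mathbf{H}_m(h;t)=e^{-t}T^\top\mathbf{H}_m(g';t)T$, where $T$ is the invertible unit upper-triangular matrix sending the coefficients of $p(x)$ to those of $p(x-1)$. So the whole descent step becomes a statement about convex functions.

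\textbf{Log-convexity is not needed.} In Step~3(iii), log-convexity is unnecessary, and so is your worry about zeros. The function $\psi_{k+1}$ is already convex, since $\psi_{k+1}''=g^{(2k+4)}$ is a diagonal entry of $\mathbf{H}_{m+1}(g;t)$ whenever $k+1\leq m-1$. A nonnegative convex function with $\psi_{k+1}'(t_1)<0$ grows at least linearly to the left of $t_1$, so it cannot be integrable on $(-\infty,t_0]$. Your parenthetical claim is nevertheless true: convexity of $e^{-\lambda t}\varphi_p$ for all $\lambda$, with $\lambda\to\pm\infty$, shows that any zero forces $\varphi_p\equiv0$.

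\textbf{On $f''\geq0$.} You are right that this hypothesis is not needed. The same holds for the paper's proof: there $f''\geq0$ is used only to make $g'$ nondecreasing. That already follows from $g''=\mathcal{E}^2f(e^t)\geq0$, which is the diagonal entry of $\mathcal{H}_{m+1}(f;e^t)$ indexed by $i=j=1$.
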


\begin{proof}
Let $g(t) = f(e^t)$ on $J = (-\infty,\log\rho)$.  The principal submatrix of $\mathcal{H}_{m+1}(f;e^t)$ indexed by $1,\dots,m$ (in the lower-right corner) is
\[
  \bigl[g^{(i+j+2)}(t)\bigr]_{i,j = 0}^{m-1} = \mathbf{H}_m(g'';t) \succeq 0.
\]
By Corollary~\ref{cor:karlin-psd}, the kernel $g''(s+t)$ is positive semidefinite of order $m$ on $X = (-\infty,\frac12\log\rho)$.

Now, direct differentiation of $g(t) = f(e^t)$ gives
\[
  g'(t) = e^t f'(e^t)\geq0, \qquad g''(t)-g'(t) = e^{2t}f''(e^t)\geq0.
\]
In particular, $g'' \geq g' \geq 0$, so $g'$ is nondecreasing. Therefore the finite, nonnegative limit $\ell := \lim_{t\to-\infty}g'(t)$ exists.  We claim $\ell = 0$.  Indeed, if $\ell>0$, then for fixed $T\in J$ and $t<T$,
\[
  g(t) = g(T)-\int_t^Tg'(u) \,\mathrm{d}u \leq g(T)-\ell(T-t),
\]
which is negative for large enough negative $t$, contradicting $g(t) = f(e^t)\geq0$.  Hence
\begin{equation}\label{eq:g-prime-integral}
  g'(x) = \int_{-\infty}^x g''(u)\,\mathrm{d}u = \int_0^\infty g''(x-t)\,\mathrm{d}t.
\end{equation}

Fix $s_1,\dots,s_m\in X$.  For $T>0$, the matrix
\[
  \int_0^T \bigl[g''((s_i-t/2)+(s_j-t/2))\bigr]_{i,j = 1}^m\,\mathrm{d}t
\]
is positive semidefinite since every integrand is positive semidefinite, as $s_i-t/2\in X$.  Letting $T\to\infty$ and using \eqref{eq:g-prime-integral} entrywise yields
\begin{equation}\label{eq:g-prime-kernel}
  [g'(s_i+s_j)]_{i,j = 1}^m\succeq 0.
\end{equation}
Thus $g'(s+t)$ is positive semidefinite of order $m$. Set $h(t) := e^{-t}g'(t) = f'(e^t).$ For $D = \operatorname{diag}(e^{-s_1},\dots,e^{-s_m})$, Equation
\eqref{eq:g-prime-kernel} gives the diagonal congruence
\[
  [h(s_i+s_j)]_{i,j = 1}^m = D[g'(s_i+s_j)]_{i,j = 1}^mD\succeq 0.
\]
The necessary half of Lemma~\ref{lem:karlin} now implies $\mathbf{H}_m(h;t)\succeq 0$ on $J$.  Finally, $h^{(k)}(t) = \mathcal{E}^kf'(e^t)$, so this is exactly \eqref{eq:descent-output}.
\end{proof}

\begin{remark}\label{rem:descent-kernel-mechanism}
The preceding proof uses two operations preserving finite-order kernel positivity. First, $g'(x) = \int_0^\infty g''(x-t)\,\mathrm{d}t $ implies, for $s_1,\ldots,s_m\in X$,
\[
  \bigl[g'(s_i+s_j)\bigr]_{i,j = 1}^m = \int_0^\infty \bigl[g''\bigl((s_i-t/2)+(s_j-t/2)\bigr) \bigr]_{i,j = 1}^m \,\mathrm{d}t.
\]
Thus the kernel associated with $g'$ is a positive mixture of translates of the kernel associated with $g''$. Since $X = (-\infty,\frac12\log\rho)$ is invariant under negative translations, every matrix in the integrand is positive semidefinite.

Second, since $f'(e^{s_i+s_j}) = e^{-s_i}e^{-s_j}g'(s_i+s_j), $ we have
\[
  \bigl[f'(e^{s_i+s_j})\bigr]_{i,j = 1}^m = D\bigl[g'(s_i+s_j)\bigr]_{i,j = 1}^m D, \qquad D = \operatorname{diag}(e^{-s_1},\ldots,e^{-s_m}).
\]
The passage from $g'$ to $f'\circ\exp$ is therefore a positive diagonal congruence.

Consequently, the result is the composition of two kernel-positivity operations: integration against positive translates and multiplication by a positive separable factor. The conditions $f',f''\geq0$ ensure that $g'$ is nonnegative and nondecreasing, while $f\geq0$ and the left-unboundedness of $J$ force
\[
  \lim_{t\to-\infty}g'(t) = 0,
\]
which is precisely the condition needed for the integral representation above.
\end{remark}

\section{Proof of the main theorem}\label{sec:proof-main}

\subsection{The smooth case}

We now return to the fixed-dimensional preserver problem and prove Theorem~\ref{thm:main}. Its smoothness assumption will be removed in the following subsection.

\begin{proof}[Proof of Theorem~\ref{thm:main}]
Proposition~\ref{prop:rank-one} shows immediately that \ref{item:rank-one-signs} and \ref{item:conditions} are equivalent. It therefore remains to prove the equivalence of \ref{item:preserver} and \ref{item:rank-one-signs}.

If $n = 1$, all three assertions reduce to $f\geq0$ on $I$. Assume henceforth that $n\geq2$. Suppose first that \ref{item:preserver} holds. Positivity is then preserved, in particular, on the rank-one subset $\mathbb{P}_n^1(I)$. Moreover, the Horn--Loewner theorem gives \eqref{eq:ordinary-signs}. Hence \ref{item:rank-one-signs} holds.

Conversely, suppose that \ref{item:rank-one-signs} holds. We prove \ref{item:preserver} by induction on $n$. Let $n = 2$. By hypothesis, $f$ preserves positivity on $\mathbb{P}_2^1(I)$. Since $f'\geq0$, the function $f'$ preserves positivity on $\mathbb{P}_1(I)$. Proposition~\ref{prop:extension} therefore shows that $f$ preserves positivity on $\mathbb{P}_2(I)$.

Now let $n\geq3$ and assume the result in dimension $n-1$. Proposition~\ref{prop:rank-one} gives
\[
  \mathcal{H}_n(f;x)\succeq 0, \qquad x\in I.
\]
Applying Theorem~\ref{thm:descent} with $m = n-1$ yields
\[
  \mathcal{H}_{n-1}(f';x)\succeq 0, \qquad x\in I.
\]
Proposition~\ref{prop:rank-one}, now applied to $f'$ in dimension $n-1$, shows that $f'$ preserves positivity on $\mathbb{P}_{n-1}^1(I)$. Moreover,
\[
  (f')^{(k)} = f^{(k+1)}\geq0, \qquad 0\leq k\leq n-2,
\]
and
\[
  f'\in C^{2n-3}(I)\subset C^{2n-4}(I).
\]
Thus $f'$ satisfies assertion \ref{item:rank-one-signs} in dimension $n-1$. By the induction hypothesis, $f'$ preserves positivity on $\mathbb{P}_{n-1}(I)$. Finally, Proposition~\ref{prop:extension} shows that $f$ preserves positivity on $\mathbb{P}_n(I)$.
\end{proof}

\subsection{Removal of the smoothness hypothesis}\label{sec:nonsmooth}

The smoothness assumption in Theorem~\ref{thm:main} can be removed by regularizing multiplicatively and interpreting its derivatives in the sense of distributions.  If $U\subset\mathbb{R}$ is open, let
\[
  \mathcal{D}(U) := C_c^\infty(U)
\]
denote the space of smooth, compactly supported functions on $U$, and let
\[
  \mathcal{D}'(U) := \mathcal{D}(U)'
\]
be its continuous dual, i.e., the \emph{space of distributions} on $U$.  We denote the action of a distribution $T\in\mathcal{D}'(U)$ on a function $\varphi\in\mathcal{D}(U)$ by $\langle T,\varphi\rangle$.  Let
\[
  L_{\mathrm{loc}}^1(U) := \left\{h:U\to\mathbb{R} \text{ measurable}:\, \forall K\subset U \text{ compact},\, \int_K |h(x)|\,\mathrm{d}x<\infty \right\}
\]
be the space of \emph{locally integrable functions} on $U$. Every function $h\in L_{\mathrm{loc}}^1(U)$ is identified with the distribution defined by
\[
  \langle h,\varphi\rangle := \int_U h(x)\varphi(x)\,\mathrm{d}x.
\]
Its $k$th distributional derivative $D^kh\in\mathcal{D}'(U)$ is then defined by
\[
  \langle D^kh,\varphi\rangle := (-1)^k\int_U h(x)\varphi^{(k)}(x)\,\mathrm{d}x.
\]
A distribution $T\in\mathcal{D}'(U)$ is called \emph{positive}, and we write $T\geq0$, if
\[
  \langle T,\varphi\rangle\geq0
\]
for every nonnegative $\varphi\in C_c^\infty(U)$.  Every positive distribution is represented by a positive Radon measure (see, e.g., \cite[Theorem~2.1.7]{Hormander}).

For a locally integrable function $h$ on $U$, let $D^kh$ be its $k$th distributional derivative.  Given $g\in L^1_{\mathrm{loc}}(J)$, define the distributional Hankel matrix by
\begin{equation}\label{eq:distributional-hankel}
  \mathbf{H}_n^{\mathrm{dist}}(g) := \bigl[ D^{i+j}g \bigr]_{i,j = 0}^{n-1}.
\end{equation}
We write $\mathbf{H}_n^{\mathrm{dist}}(g)\succeq0$ when
\begin{equation}\label{eq:distributional-hankel-positive}
  \bigl[\langle D^{i+j}g,\psi\rangle\bigr]_{i,j = 0}^{n-1}\succeq 0
\end{equation}
for every nonnegative $\psi\in C_c^\infty(J)$.

The following result extends Theorem~\ref{thm:main} to arbitrary functions.
\begin{theorem}\label{thm:distributional}
    Fix $n\geq2$ and $0<\rho\leq\infty$, let $I = (0,\rho)$, $J = (-\infty,\log\rho)$, and let $f:I\to\mathbb{R}$. Define $g:J\to\mathbb{R}$ by $g(t) = f(e^t)$.  Then the following assertions are equivalent.
    \begin{enumerate}
        \item\label{item:distributional-preserver}
        For every $A\in\mathbb{P}_n(I)$, one has $f[A]\succeq 0$.
        \item\label{item:distributional-conditions}
        The function $f$ is continuous,
        \begin{equation}\label{eq:distributional-ordinary-signs}
          D^kf\geq0\quad\text{in}\quad\mathcal{D}'(I), \qquad 0\leq k\leq n-1,
        \end{equation}
        and
        \begin{equation}\label{eq:distributional-euler-hankel}
          \mathbf{H}_n^{\mathrm{dist}}(g)\succeq0.
        \end{equation}
    \end{enumerate}
\end{theorem}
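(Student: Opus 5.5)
Both directions go through Theorem~\ref{thm:main}, applied to a multiplicative regularization of $f$. Fix a nonnegative mollifier $\theta\in C_c^\infty((0,\infty))$ supported near $1$ with $\int_0^\infty\theta(y)\,\frac{\mathrm{d}y}{y}=1$. For $\varepsilon>0$ let $\theta_\varepsilon$ be its rescaling supported in $(e^{-\varepsilon},e^{\varepsilon})$, and set
\[
  f_\varepsilon(x) := \int_0^\infty f(xy)\,\theta_\varepsilon(y)\,\frac{\mathrm{d}y}{y}.
\]
This is defined on $I_\varepsilon := (0,\rho e^{-\varepsilon})$. In logarithmic variables, $g_\varepsilon(t)=f_\varepsilon(e^t)$ is the additive convolution $g*\check\eta_\varepsilon$ with a mollifier $\eta_\varepsilon\ge0$ on $(-\varepsilon,\varepsilon)$, so $g_\varepsilon\in C^\infty$. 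The key observation is that multiplicative averaging preserves positivity preservation. If $A\in\mathbb{P}_n(I_\varepsilon)$ and $y\in(e^{-\varepsilon},e^{\varepsilon})$, then $yA\in\mathbb{P}_n(I)$, and therefore
\[
  f_\varepsilon[A]=\int f[yA]\,\theta_\varepsilon(y)\,\frac{\mathrm{d}y}{y}\succeq0.
\]
The argument applies verbatim with $I$ replaced by the interval $I_\varepsilon=(0,\rho')$, since Theorem~\ref{thm:main} holds for every $\rho'$.

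\textbf{(i)$\Rightarrow$(ii).} First I need measurability and local integrability, so that the regularization makes sense. For $n\ge2$, the $2\times2$ principal minors show that $f\ge0$. The known Vasudeva-type argument, applied to matrices $\begin{psmallmatrix} a&b\\ b&c\end{psmallmatrix}$, gives monotonicity, namely $f$ nondecreasing. It also gives multiplicative midpoint convexity, $f(\sqrt{ac})^2\le f(a)f(c)$. Together these yield continuity, because a monotone, midpoint log-convex function in $\log x$ is continuous. I expect this step to be the main obstacle, since the regularization argument depends on it. I would reproduce the Vasudeva argument: monotone $\Rightarrow$ bounded and measurable; then $t\mapsto \log f(e^t)$ is midpoint convex and locally bounded, hence convex and continuous wherever $f>0$; and $f\equiv0$ on an initial segment is handled separately using monotonicity.

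Next, $f_\varepsilon$ is a smooth preserver on $\mathbb{P}_n(I_\varepsilon)$, so Theorem~\ref{thm:main} gives
\[
  f_\varepsilon^{(k)}\ge0 \quad\text{and}\quad \mathbf{H}_n(g_\varepsilon;t)\succeq0.
\]
Pairing with a test function $\psi\ge0$ and integrating preserves both inequalities. As $\varepsilon\to0$, we have $f_\varepsilon\to f$ locally uniformly by continuity, so $D^kf_\varepsilon\to D^kf$ and $D^{i+j}g_\varepsilon\to D^{i+j}g$ in $\mathcal{D}'$. The inequalities~\eqref{eq:distributional-ordinary-signs} and~\eqref{eq:distributional-euler-hankel} survive this limit.

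\textbf{(ii)$\Rightarrow$(i).} Given continuous $f$ satisfying the distributional conditions, form $f_\varepsilon$ as above. Convolution with a nonnegative kernel commutes with derivatives, using $D(g*\eta)=(Dg)*\eta$. Hence
\[
  \langle D^{i+j}g_\varepsilon,\psi\rangle=\langle D^{i+j}g,\psi*\eta_\varepsilon\rangle,
\]
and $\psi*\eta_\varepsilon\ge0$ whenever $\psi\ge0$. Therefore $\bigl[\int g_\varepsilon^{(i+j)}\psi\bigr]\succeq0$ for all $\psi\ge0$. Since $g_\varepsilon$ is smooth, letting $\psi$ concentrate at a point gives $\mathbf{H}_n(g_\varepsilon;t)\succeq0$ pointwise.

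For the ordinary derivatives, I need to pass from $D^kf\ge0$ to $f_\varepsilon^{(k)}\ge0$. Write
\[
  f_\varepsilon(x)=\int f(xy)\,\theta_\varepsilon(y)\,\frac{\mathrm{d}y}{y}.
\]
Then $f_\varepsilon^{(k)}(x)=\int y^k (D^kf)(xy)\,\theta_\varepsilon(y)\,\frac{\mathrm{d}y}{y}$, understood as a pairing of the positive measure $D^kf$ with a nonnegative test function, so $f_\varepsilon^{(k)}\ge0$. Theorem~\ref{thm:main} now shows that $f_\varepsilon$ preserves positivity on $\mathbb{P}_n(I_\varepsilon)$.

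Finally, fix $A\in\mathbb{P}_n(I)$. Its entries lie in $(0,\rho e^{-\varepsilon})$ for all small $\varepsilon$, so $f_\varepsilon[A]\succeq0$ for such $\varepsilon$. Continuity of $f$ gives $f_\varepsilon(a_{ij})\to f(a_{ij})$, hence $f[A]\succeq0$.
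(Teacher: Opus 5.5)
Your proposal is correct and is essentially the paper's proof: continuity of $f$ from Vasudeva's $2\times2$ theorem (which the paper simply cites), a multiplicative mollification $f_\varepsilon$ that is again a preserver so that Theorem~\ref{thm:main} applies, and passage to the limit in $\mathcal{D}'$ for (i)$\Rightarrow$(ii) and pointwise for (ii)$\Rightarrow$(i). The differences are minor: the paper uses the one-sided kernel $f_\varepsilon(x)=\int_0^\varepsilon\eta_\varepsilon(s)f(e^{-s}x)\,\mathrm{d}s$, which only contracts entries and so keeps $f_\varepsilon$ a preserver on all of $I$ without shrinking to $(0,\rho e^{-\varepsilon})$; and in your sketch of Vasudeva's argument the zero case needs the midpoint inequality $f(\sqrt{ac})^2\le f(a)f(c)$, under which a single zero propagates to all of $I$, since monotonicity alone allows jumps such as $\mathbf{1}_{[a,\rho)}$.
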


\begin{proof}
Assume first that \ref{item:distributional-preserver} holds.  Repeating rows and columns shows that $f[-]$ also preserves $\mathbb{P}_2(I)$.  Vasudeva's theorem \cite[Theorem 2]{Vasudeva} therefore implies that $f$ is continuous on $I$.

Choose $\eta\in C_c^\infty((0,1))$ such that $\eta\geq0$ and $\int_0^1\eta(s)\,\mathrm{d}s = 1$, and set
\[
  \eta_\varepsilon(s) := \varepsilon^{-1}\eta(s/\varepsilon).
\]
Define the one-sided multiplicative regularization
\[
  f_\varepsilon(x) := \int_0^\varepsilon \eta_\varepsilon(s)f(e^{-s}x)\,\mathrm{d}s, \qquad x\in I.
\]
By the standard convolution regularization theorem, the convolution of a locally integrable function with a compactly supported smooth function is smooth, and differentiation may be passed to the smooth factor (see e.g.~\cite[Section 5.2]{FriedlanderJoshi}). Hence, $f_\varepsilon$ is a smooth function. Equivalently, if $g_\varepsilon(t) := f_\varepsilon(e^t)$, then
\[
  g_\varepsilon(t) = \int_0^\varepsilon\eta_\varepsilon(s)g(t-s)\,\mathrm{d}s.
\]
The interval $J$ is stable under negative translations, so $g_\varepsilon$, and hence $f_\varepsilon$, is smooth on its entire domain.

If $A\in\mathbb{P}_n(I)$ and $s>0$, then $e^{-s}A\in\mathbb{P}_n(I)$.  Consequently,
\[
  f_\varepsilon[A] = \int_0^\varepsilon \eta_\varepsilon(s)f[e^{-s}A] \,\mathrm{d}s \succeq 0.
\]
Theorem~\ref{thm:main}, applied to $f_\varepsilon$, gives
\begin{equation}\label{eq:regularized-smooth-conditions}
  f_\varepsilon^{(k)}\geq0\quad(0\leq k\leq n-1), \qquad \bigl[g_\varepsilon^{(i+j)}\bigr]_{i,j = 0}^{n-1}\succeq 0.
\end{equation}
As $\varepsilon\to 0^+$, the functions $f_\varepsilon$ converge to $f$ locally uniformly on $I$, and $g_\varepsilon$ converge to $g$ locally uniformly on $J$.  Distributional differentiation is continuous, and the cone of positive distributions is closed.  Passing to the limit in \eqref{eq:regularized-smooth-conditions} therefore proves both \eqref{eq:distributional-ordinary-signs} and \eqref{eq:distributional-euler-hankel}.

Conversely, suppose that \ref{item:distributional-conditions} holds and form $f_\varepsilon$ as above. For $s>0$, let $S_sh(x) := h(e^{-s}x)$.  The distributional chain rule gives
\[
  D^k(S_sf) = e^{-ks}S_s(D^kf).
\]
Pullback by a positive dilation preserves positive distributions. Therefore, it follows from \eqref{eq:distributional-ordinary-signs} and the identity above that
\[
  D^kf_\varepsilon = \int_0^\varepsilon \eta_\varepsilon(s)e^{-ks}S_s(D^kf)\,\mathrm{d}s \geq0.
\]
Since $f_\varepsilon$ is smooth, this says pointwise that
\begin{equation}\label{eq:regularized-ordinary-signs}
  f_\varepsilon^{(k)}(x)\geq0, \qquad 0\leq k\leq n-1.
\end{equation}
Now, fix $t\in J$ and write $\varphi_{t,\varepsilon}(u) := \eta_\varepsilon(t-u).$ Since $\eta_\varepsilon\geq0$ and $\operatorname{supp}\eta_\varepsilon\subset(0,\varepsilon)$, we have $\varphi_{t,\varepsilon}\in C_c^\infty(J)$ and $\varphi_{t,\varepsilon}\geq0.$ Moreover, for every integer $k\geq0$, it follows from the definitions that
\[
  g_\varepsilon^{(k)}(t) = \bigl\langle D^kg,\,\varphi_{t,\varepsilon}\bigr\rangle.
\]
Let $\xi = (\xi_0,\ldots,\xi_{n-1})^\top\in\mathbb{R}^n$.  Using the above identity, we obtain
\begin{align*}
  \xi^\top \big[g_\varepsilon^{(i+j)}(t)\bigr]_{i,j = 0}^{n-1}\xi = \xi^\top \bigl[\langle D^{i+j}g,\varphi_{t,\varepsilon}\rangle\bigr]_{i,j = 0}^{n-1}\xi.
\end{align*}
Since $\varphi_{t,\varepsilon}$ is a nonnegative test function, \eqref{eq:distributional-hankel-positive} and \eqref{eq:distributional-euler-hankel} ensure that
\[
  \xi^\top \bigl[g_\varepsilon^{(i+j)}(t)\bigr]_{i,j = 0}^{n-1}\xi \geq0 \qquad\text{for every }\xi\in\mathbb{R}^n.
\]
Therefore
\begin{equation}\label{eq:regularized-euler-hankel}
  \bigl[g_\varepsilon^{(i+j)}(t)\bigr]_{i,j = 0}^{n-1}\succeq0.
\end{equation}
Since  $ g_\varepsilon^{(i+j)}(t) = \mathcal{E}^{i+j} f_\varepsilon(e^t),$ Equations \eqref{eq:regularized-ordinary-signs} and \eqref{eq:regularized-euler-hankel}, together with Theorem~\ref{thm:main}, show that $f_\varepsilon[-]$ preserves $\mathbb{P}_n(I)$.  Finally, $f_\varepsilon\to f$ locally uniformly, and the cone of positive semidefinite matrices is closed.  Hence, for every $A\in\mathbb{P}_n(I)$, the convergence $f_\varepsilon[A]\to f[A]$ implies $f[A]\succeq 0$, proving \ref{item:distributional-preserver}.
\end{proof}

\begin{remark}\label{rem:distributional-tests}
The conditions in Theorem~\ref{thm:distributional} involve no pointwise derivatives.  Explicitly, \eqref{eq:distributional-ordinary-signs} is equivalent to
\[
  (-1)^k\int_I f(x)\varphi^{(k)}(x)\,\mathrm{d}x \geq 0 \qquad (0\leq k \leq n-1)
\]
for every nonnegative $\varphi\in C_c^\infty(I)$, while \eqref{eq:distributional-euler-hankel} is equivalent to
\[
  \left[ (-1)^{i+j}\int_J g(t)\psi^{(i+j)}(t)\,\mathrm{d}t \right]_{i,j = 0}^{n-1}\succeq 0
\]
for every nonnegative $\psi\in C_c^\infty(J)$. Moreover, the bottom-right entry of \eqref{eq:distributional-hankel} shows that $D^{2n-2}g$ is a positive Radon measure. It follows that $D^{2n-3} g$ has a locally BV representative and $D^{2n-4}g$ has a locally absolutely continuous representative (see, e.g., \cite[Theorem~3.29 and Corollary~3.33]{Folland}). Thus all derivatives up to order $2n-4$ are classical and continuous.
\end{remark}

\section{Recovering classical theorems}\label{sec:examples}

We now show how Theorems \ref{thm:main} and \ref{thm:distributional} can be used to recover and extend many classical and recent characterizations of entrywise preservers.

\subsection{Critical exponent of \texorpdfstring{$x^{\alpha}$}{xᵅ}}

Let $f(x) = x^\alpha$ on $I = (0,\infty)$.  Then
\[
  \mathcal{H}_n(f;x) = x^\alpha v_\alpha v_\alpha^\top, \qquad v_\alpha = (1,\alpha,\dots,\alpha^{n-1})^\top,
\]
so the Euler--Hankel condition always holds.  For $n\geq2$, the ordinary derivative conditions hold exactly when 
\[
  \alpha\in\mathbb{N}_0\cup[n-2,\infty).
\]
Theorem~\ref{thm:main} therefore recovers the FitzGerald--Horn theorem \cite[Theorem~2.2]{FitzGeraldHorn} for matrices with positive entries.

\subsection{Vasudeva's theorem for \texorpdfstring{$n = 2$}{n = 2}}

For $f\in C^2(I)$,
\[
  \mathcal{H}_2(f;x) = 
  \begin{bmatrix}
    f(x) & xf'(x)\\
    xf'(x) & xf'(x)+x^2f''(x)
  \end{bmatrix}.
\]
Theorem~\ref{thm:main} therefore yields the following. 

\begin{corollary}\label{cor:two-by-two}
Let $f\in C^2(I)$.  Then $f[-]$ preserves positivity on $\mathbb{P}_2(I)$ if and only if $f,f'\geq 0$ and
\begin{equation}\label{eq:two-by-two-determinant}
  f(x)\bigl(xf'(x)+x^2f''(x)\bigr)-x^2f'(x)^2\geq0
\end{equation}
for every $x\in I$.
\end{corollary}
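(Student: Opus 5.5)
This is Theorem~\ref{thm:main} specialized to $n=2$, so the plan is to apply that theorem with $n=2$ and translate its condition \ref{item:conditions} into the stated inequalities. For $n=2$ we have $C^{2n-2}(I)=C^2(I)$, which is exactly the assumed regularity. Theorem~\ref{thm:main} then says that $f[-]$ preserves $\mathbb{P}_2(I)$ if and only if two things hold: first, $f^{(k)}\geq0$ for $k=0,1$, that is, $f,f'\geq0$ on $I$; second, $\mathcal{H}_2(f;x)\succeq0$ for every $x\in I$.

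Next I would compute the Euler derivatives. We have $\mathcal{E}f=xf'$, and
\[
\mathcal{E}^2f=x\bigl(f'+xf''\bigr)=xf'+x^2f''.
\]
This gives the displayed $2\times2$ matrix $\mathcal{H}_2(f;x)$.

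A real symmetric $2\times2$ matrix is positive semidefinite if and only if both diagonal entries and the determinant are nonnegative. For $\mathcal{H}_2(f;x)$ these three conditions are:
\begin{itemize}
\item $f(x)\geq0$;
\item $xf'(x)+x^2f''(x)\geq0$;
\item $f(x)\bigl(xf'(x)+x^2f''(x)\bigr)-x^2f'(x)^2\geq0$, which is \eqref{eq:two-by-two-determinant}.
\end{itemize}

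It remains to see that the lower-right diagonal condition is redundant. The only step needing any care is showing it follows from $f,f'\geq0$ together with \eqref{eq:two-by-two-determinant}. I would split into two cases at each point $x$.

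\emph{Case $f(x)>0$.} The determinant inequality gives
\[
xf'(x)+x^2f''(x)\geq \frac{x^2f'(x)^2}{f(x)}\geq0.
\]

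\emph{Case $f(x)=0$.} The determinant inequality forces $x^2f'(x)^2\leq0$, so $f'(x)=0$. Since $f\geq0$ attains a minimum at the interior point $x$, we get $f''(x)\geq0$. Hence $xf'(x)+x^2f''(x)=x^2f''(x)\geq0$.

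In both cases $\mathcal{H}_2(f;x)\succeq0$. Therefore condition~\ref{item:conditions} of Theorem~\ref{thm:main} is equivalent to $f,f'\geq0$ together with \eqref{eq:two-by-two-determinant}, which proves the corollary.
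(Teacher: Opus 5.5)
Your proof is correct and follows the paper's route: the paper obtains the corollary by specializing Theorem~\ref{thm:main} to $n=2$ and computing $\mathcal{H}_2(f;x)$. Your case analysis, which shows that the diagonal condition $xf'(x)+x^2f''(x)\geq0$ already follows from $f\geq0$ and \eqref{eq:two-by-two-determinant} (using the interior-minimum argument when $f(x)=0$), makes explicit a step the paper leaves implicit.
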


On any subinterval on which $f>0$, Equation \eqref{eq:two-by-two-determinant} is equivalent to $(\log(f(e^t)))''\geq0,$ while $f'\geq0$ says that $f(e^t)$ is nondecreasing.  Thus the criterion is precisely the differential form of Vasudeva's characterization \cite[Theorem 2]{Vasudeva}: nonnegativity, monotonicity, and multiplicative midconvexity
\[
  f(\sqrt{xy})^2\leq f(x)f(y).
\]

% \subsection{Entrywise preservers in all dimensions}

% Theorem~\ref{thm:main}, together with its distributional extension, recovers the Schoenberg--Rudin--Vasudeva classification of entrywise preservers in all dimensions on positive intervals.

% \begin{corollary}\label{cor:schoenberg-rudin-necessity}
%     Let $I = (0,\rho)$, where $0<\rho\leq\infty$, and let $f:I\to\mathbb{R}$.  Then
%     \[
%       f[A]\succeq0 \qquad \text{for every }A\in\mathbb{P}_n(I) \text{ and every }n\geq1
%     \]
%     if and only if $f$ is absolutely monotone on $I$.
% \end{corollary}

% \begin{proof}
% Sufficiency follows from the Schur product theorem. We prove necessity. 

% Fix $n\geq2$.  Since $f[-]$ preserves $\mathbb{P}_n(I)$, Theorem~\ref{thm:distributional} and Remark~\ref{rem:distributional-tests} imply that $f\in C^{2n-4}(I)$. As $n$ is arbitrary, we obtain $f\in C^\infty(I)$. Fix an integer $k\geq0$ and choose $n\geq k+1$.  Applying Theorem~\ref{thm:main} in dimension $n$ gives
% \[
%   f^{(k)}(x)\geq0 \qquad (x\in I).
% \]
% Since $k$ was arbitrary, $f$ is absolutely monotone on $I$. Bernstein's theorem on absolutely monotone functions \cite{Bernstein} therefore yields a power-series representation $ f(x) = \sum_{k = 0}^\infty c_kx^k $ on $I$, with $c_k\geq0$ for every $k\geq0$.
% \end{proof}

\subsection{Polynomials and sums of real powers}

We next use Theorem~\ref{thm:main} to obtain a complete classification for generalized polynomials with prescribed real exponents, extending the sharp quantitative results of Khare and Tao \cite[Theorems~1.11 and~1.12]{KhareTao} by allowing arbitrary real coefficients, including vanishing coefficients. We adopt the same notation as in \cite{KhareTao}: for any vector $\mathbf{u} \in \mathbb{R}^N$, we let $V(\mathbf{u})$ denote the Vandermonde determinant: 
\[
  V(\mathbf{u}) := \det \left(\mathbf{u}^{\circ 0} | \dots | \mathbf{u}^{\circ (N-1)}\right), 
\]
where $\mathbf{u}^{\circ i} := (u_1^i, \dots, u_N^i)^T$ is the $i$-th entrywise power of $\mathbf{u} = (u_1, \dots, u_N)^T$. For any positive integer $r$ and $\alpha \in \mathbb{R}$, we also write $(\alpha)_r := \alpha(\alpha-1)\cdots(\alpha-r+1)$ and let $(\alpha)_0 := 1$.

\begin{theorem}\label{thm:generalized-polynomial-preservers}
    Fix an integer $N\geq2$ and real powers $n_0<\cdots<n_{N-1}<M$. Also fix $\rho>0$ and $c_{n_0},\ldots,c_{n_{N-1}},c'\in\mathbb{R}$, and define $f(x) := \sum_{j = 0}^{N-1}c_{n_j}x^{n_j}+c'x^M.$ Let $\mathcal{A}_N := \mathbb{N}_0\cup[N-2,\infty)$ and, if $c_{n_j}>0$ for every $j$, define
    \begin{equation}\label{eq:generalized-polynomial-critical-constant}
      \mathcal{C} := \sum_{j = 0}^{N-1} \frac{V(\mathbf{n}_j)^2}{V(\mathbf{n})^2} \frac{\rho^{M-n_j}}{c_{n_j}},
    \end{equation}
    where $\mathbf{n} := (n_0,\ldots,n_{N-1})$ and $ \mathbf{n}_j := (n_0,\ldots,n_{j-1},n_{j+1},\ldots,n_{N-1},M)$. Then the following are equivalent:
    \begin{enumerate}
        \item For every $A \in \mathbb{P}_N((0,\rho))$, one has $f[A] \succeq 0$;
        \item one of the following two mutually exclusive conditions holds:
        \begin{enumerate}
          \item[(1)]\label{item:all-nonnegative-generalized} All the coefficients are nonnegative and
          \begin{equation}\label{eq:active-exponents-admissible}
            c_{n_j}>0\Longrightarrow n_j\in\mathcal{A}_N, \qquad c'>0\Longrightarrow M\in\mathcal{A}_N.
          \end{equation}
          \item[(2)]\label{item:one-negative-generalized} For all $0\leq j \leq N-1$, we have $c_{n_j}>0$, $n_j\in\mathcal{A}_N$, and
          \begin{equation}\label{eq:negative-generalized-threshold}
            -\mathcal{C}^{-1}\leq c'<0.
          \end{equation}
        \end{enumerate}
    \end{enumerate}
    Moreover, in case (2), $f[-]$ does not preserve $\mathbb{P}_{N+1}((0,\rho))$.
\end{theorem}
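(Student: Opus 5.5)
The plan is to apply Theorem~\ref{thm:main}, with $n=N$, to $f$. This is legitimate because $f$ is real-analytic on $I=(0,\rho)$. The whole problem then reduces to the ordinary derivative conditions \eqref{eq:ordinary-signs} and the Euler--Hankel condition \eqref{eq:euler-hankel}. The key identity is that $\mathcal{E}^k x^\alpha=\alpha^k x^\alpha$. Hence, writing $v_\alpha=(1,\alpha,\dots,\alpha^{N-1})^\top$,
\[
  \mathcal{H}_N(f;x)=\sum_{j=0}^{N-1}c_{n_j}x^{n_j}v_{n_j}v_{n_j}^\top+c'x^Mv_Mv_M^\top .
\]
Let $V$ be the invertible Vandermonde matrix with columns $v_{n_0},\dots,v_{n_{N-1}}$, and set $w:=V^{-1}v_M$. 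By Cramer's rule, $w_j=\pm V(\mathbf{n}_j)/V(\mathbf{n})$. Equivalently, $q(M)=\sum_j w_jq(n_j)$ for every polynomial $q$ of degree at most $N-1$.

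\emph{Necessity of the sign patterns.} Assume (i); then \eqref{eq:euler-hankel} holds. First suppose some $c_{n_j}<0$. Choose $z\neq0$ orthogonal to the $N-1$ vectors $v_{n_i}$ with $i\neq j$; these are linearly independent. Then $z\cdot v_{n_j}\neq0$, and
\[
  0\leq z^\top\mathcal{H}_N z=c_{n_j}x^{n_j}(z\cdot v_{n_j})^2+c'x^M(z\cdot v_M)^2 .
\]
Letting $x\to0^+$, the first term dominates because $n_j<M$, which is a contradiction. So all $c_{n_j}\geq0$. Next suppose $c'<0$ and some $c_{n_j}=0$. Take $z$ orthogonal to $v_{n_i}$ for all $i\neq j$. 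Then $z\cdot v_M=w_j(z\cdot v_{n_j})$ with $w_j\neq0$, so $z^\top\mathcal{H}_Nz=c'x^Mw_j^2(z\cdot v_{n_j})^2<0$. Hence $c'<0$ forces every $c_{n_j}>0$.

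Admissibility of the exponents follows from \eqref{eq:ordinary-signs}. Look at $f^{(k)}(x)=\sum c_\alpha(\alpha)_kx^{\alpha-k}$ as $x\to0^+$, where the smallest active exponent dominates. Peeling off admissible terms one at a time in the manner of FitzGerald--Horn shows that a positive coefficient on a non-admissible exponent produces a negative $f^{(k)}$ near $0$ for some $k\leq N-1$. The threshold $c'\geq-\mathcal{C}^{-1}$ comes from the Schur complement below. The final claim about $\mathbb{P}_{N+1}$ should follow the same way: in $\mathcal{H}_{N+1}$ the $N+1$ vectors $v_\alpha\in\mathbb{R}^{N+1}$ are independent, so the coefficient $c'<0$ is isolated by a suitable $z$.

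\emph{The Euler--Hankel condition in case (2).} Here $D:=\operatorname{diag}(c_{n_j}x^{n_j})\succ0$, so $VDV^\top\succ0$. A rank-one perturbation argument (Schur complement) shows that $VDV^\top+c'x^Mv_Mv_M^\top\succeq0$ if and only if $1+c'x^M\,v_M^\top(VDV^\top)^{-1}v_M\geq0$. Since $v_M^\top(VDV^\top)^{-1}v_M=w^\top D^{-1}w$, this is the inequality
\[
  |c'|\sum_j \frac{w_j^2x^{M-n_j}}{c_{n_j}}\leq1 .
\]
The left side is increasing in $x$ because $M>n_j$. So the condition holds for all $x\in(0,\rho)$ if and only if $|c'|\,\mathcal{C}\leq1$, which is exactly \eqref{eq:negative-generalized-threshold}. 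In case (1), \eqref{eq:euler-hankel} is immediate since all coefficients are nonnegative. The derivative signs in case (1) follow termwise from the FitzGerald--Horn computation of Section~\ref{sec:examples}.

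\emph{Main obstacle: \eqref{eq:ordinary-signs} in case (2).} I expect this step to be the hardest. For $k\leq N-1$ the polynomial $(\lambda)_k$ has degree at most $N-1$, so $(M)_k=\sum_jw_j(n_j)_k$, and therefore
\[
  x^kf^{(k)}(x)=\sum_j (n_j)_k\bigl(c_{n_j}x^{n_j}+c'w_jx^M\bigr).
\]
Each $(n_j)_k\geq0$ because $n_j\in\mathcal{A}_N$. So it would suffice to show that $c_{n_j}x^{n_j}+c'w_jx^M\geq0$ whenever $w_j>0$, and this is only a sufficient condition. The Schur complement bound gives $|c'|w_j^2x^{M-n_j}\leq c_{n_j}$. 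That bound implies the needed $|c'|w_jx^{M-n_j}\leq c_{n_j}$ when $w_j\geq1$, which holds for instance for $j=N-1$, but not directly when $0<w_j<1$.

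For the remaining indices I would try a different route. The idea is to choose $\xi$ in the quadratic form of the Hankel condition so that $\xi^\top\mathcal{H}_N\xi$ dominates $x^kf^{(k)}$; for example, take the polynomial $\xi(\lambda)$ vanishing at the $n_i$ with $(n_i)_k=0$ and approximating $\sqrt{(\lambda)_k}$ at the others. Failing that, I would fall back on Khare--Tao's proof of their Theorem~1.11, adapted to vanishing coefficients.
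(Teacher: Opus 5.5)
Your reduction to Theorem~\ref{thm:main} matches the paper, and so does the factorization of $\mathcal{H}_N(f;x)$ through the Vandermonde matrix. The same holds for the necessity of $c_{n_j}\geq0$ (and of $c_{n_j}>0$ when $c'<0$), the rank-one perturbation threshold, and the $\mathbb{P}_{N+1}$ claim. Your admissibility sketch is also fine once you phrase it via the least offending exponent, since the only possibly negative coefficient sits on the largest exponent $M$.

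The genuine gap is the step you flag yourself: the inequalities $f^{(k)}\geq0$ on $(0,\rho)$ in case~(2). Without them, (2)$\Rightarrow$(i) is not proved.

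\begin{itemize}
\item Termwise nonnegativity of $(n_j)_k\bigl(c_{n_j}x^{n_j}+c'w_jx^M\bigr)$ really does fail in general. Take $N=3$, $(n_0,n_1,n_2)=(0,1,2)$ and $M=2.1$, so that $w_0=(M-1)(M-2)/2\in(0,1)$. With $k=0$, $c'=-\mathcal{C}^{-1}$ and $c_{n_0}$ small, the $j=0$ term is negative near $x=\rho$. So the sum is nonnegative only through cancellation across indices.
\item Your alternative test polynomial approximating $\sqrt{(\lambda)_k}$ would leave you needing $p(M)^2\geq(M)_k$, and nothing in your argument controls that.
\item The fallback to Khare--Tao would only outsource the step. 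Case~(2) has no vanishing coefficients, so there is nothing to adapt.
\end{itemize}

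The missing ingredient is the elementary bound
\[
  0\leq(n_j)_k\leq(M)_k,\qquad 0\leq k\leq N-1,\quad 0\leq j\leq N-1.
\]
It holds for three reasons. First, $(n_j)_k=0$ for integers $n_j\leq k-1$. Second, $t\mapsto(t)_k$ is nonnegative and nondecreasing on $[k-1,\infty)\supseteq[N-2,\infty)$. Third, $M>n_{N-1}\geq N-2$, because $\mathcal{A}_N$ has only $N-2$ points below $N-2$; in particular $(M)_k>0$.

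With this bound, your idea of testing the Euler--Hankel form works with the simplest choice. Let $\xi$ be the coefficient vector of $p(\lambda):=(\lambda)_k/(M)_k$. Then $p(M)=1$ and $0\leq p(n_j)\leq1$, hence $p(n_j)^2\leq p(n_j)$. For every $x\in(0,\rho)$,
\[
  0\leq\xi^\top\mathcal{H}_N(f;x)\,\xi=\sum_{j=0}^{N-1}c_{n_j}x^{n_j}p(n_j)^2+c'x^M\leq\sum_{j=0}^{N-1}c_{n_j}x^{n_j}p(n_j)+c'x^M=\frac{x^kf^{(k)}(x)}{(M)_k}.
\]

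The paper does the same computation as a Cauchy--Schwarz inequality. It applies Cauchy--Schwarz to the whole identity $(M)_k=\sum_jw_j(n_j)_k$ with weights $c_{n_j}\rho^{n_j-M}$, giving $(M)_k^2\leq\mathcal{C}\,(M)_k\sum_jc_{n_j}\rho^{n_j-M}(n_j)_k$. It then uses $-c'\leq\mathcal{C}^{-1}$ and $x^{n_j-M}\geq\rho^{n_j-M}$. Either way, the point is to apply the Euler--Hankel inequality to the whole sum, together with $(n_j)_k^2\leq(M)_k(n_j)_k$, rather than index by index.
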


\begin{proof}
Since every real power is smooth on $(0,\rho)$, Theorem~\ref{thm:main} shows that $f[-]$ preserves $\mathbb{P}_N((0,\rho))$ if and only if
\[
  f^{(r)}(x)\geq0 \quad (0\leq r\leq N-1) \qquad\text{and}\qquad \mathcal{H}_N(f;x)\succeq0
\]
for every $x\in(0,\rho)$.

We first characterize the Euler--Hankel condition. Set
\[
  v_\alpha = (1,\alpha,\ldots,\alpha^{N-1})^\top, \qquad W = [\,v_{n_0}\mid\cdots\mid v_{n_{N-1}}\,].
\]
Then $W$ is invertible, and Lagrange interpolation gives
\[
  v_M = Wa,\qquad a_j = \prod_{k\ne j}\frac{M-n_k}{n_j-n_k}, \qquad a_j^2 = \frac{V(\mathbf{n}_j)^2}{V(\mathbf{n})^2}.
\]
Consequently,
\[
  \mathcal{H}_N(f;x) = W\bigl(D_x+c'x^Maa^\top\bigr)W^\top\!,\, \quad D_x = \operatorname{diag}(c_{n_0}x^{n_0},\ldots,c_{n_{N-1}}x^{n_{N-1}}).
\]

If $\mathcal{H}_N(f;x)\succeq0$ for every $x$, its diagonal entries, after division by $x^{n_j}$ and passage to $x\to0^+$, show that $c_{n_j}\geq0$ for every $j$.  Hence, when $c'\geq0$, the Euler--Hankel condition is equivalent simply to
\[
  c_{n_0},\ldots,c_{n_{N-1}},c'\geq0.
\]
If $c'<0$, none of the $c_{n_j}$ can vanish, so $D_x\succ0$; the rank-one perturbation criterion gives
\[
  \mathcal{H}_N(f;x)\succeq0 \Longleftrightarrow -c'\sum_{j = 0}^{N-1} \frac{a_j^2}{c_{n_j}}x^{M-n_j} \leq 1.
\]
Since $M>n_j$ for every $j$, the sum is increasing in $x$.  Thus the Euler--Hankel condition holds for all $x\in(0,\rho)$ if and only if
\[
  c_{n_j}>0\quad(0\leq j\leq N-1), \qquad -\mathcal{C}^{-1}\leq c'<0.
\]

It remains to impose the ordinary derivative inequalities.  We claim that, conditional on the preceding coefficient alternatives, they are equivalent to requiring every exponent carrying a positive coefficient to belong to
\[
  \mathcal{A}_N = \mathbb N_0\cup[N-2,\infty),
\]
except that no condition on $M$ need initially be imposed when $c'<0$.

Indeed, if $\alpha<0$ is the least exponent carrying a positive coefficient, then
\[
  f'(x) = c_\alpha\alpha x^{\alpha-1}+o(x^{\alpha-1})<0
\]
near zero.  Likewise, if $\alpha\in(0,N-2)\setminus\mathbb N$ is the least nonintegral exponent carrying a positive coefficient, then, with $m = \lfloor\alpha\rfloor$ and $r = m+2\leq N-1$,
\[
  f^{(r)}(x) = c_\alpha(\alpha)_r x^{\alpha-r} + o(x^{\alpha-r}) < 0
\]
near zero.  This proves the necessity of the stated exponent conditions.

Conversely, in the nonnegative-coefficient case the conclusion is immediate, since
\[
  (\alpha)_r\geq0 \qquad (\alpha\in\mathcal{A}_N,\ 0\leq r\leq N-1).
\]
Consider therefore the case $c'<0$.  Since $n_0,\ldots,n_{N-1}$ are $N$ distinct elements of $\mathcal{A}_N$, necessarily $n_{N-1}\geq N-2$, and hence $M>N-2$.

Fix $0\leq r\leq N-1$, and write
\[
  b_j = (n_j)_r,\qquad B = (M)_r.
\]
Then $0\leq b_j\leq B$, and, since $t\mapsto(t)_r$ has degree at most $N-1$, the same Lagrange interpolation coefficients as above give
\[
  B = \sum_{j = 0}^{N-1}a_jb_j.
\]
Set $w_j = c_{n_j}\rho^{n_j-M}$. By Cauchy--Schwarz,
\[
  B^2 \leq \left(\sum_{j = 0}^{N-1}\frac{a_j^2}{w_j}\right)\!\left(\sum_{j = 0}^{N-1}w_jb_j^2\right) \leq \mathcal{C} B\sum_{j = 0}^{N-1}w_jb_j.
\]
Thus
\[
  \mathcal{C}^{-1} \leq \frac{1}{(M)_r} \sum_{j = 0}^{N-1} c_{n_j}(n_j)_r\rho^{n_j-M}.
\]
Since $-c'\leq\mathcal{C}^{-1}$ and $x^{n_j-M}\geq\rho^{n_j-M}$ for $0<x<\rho$, it follows that
\[
  -c'(M)_r \leq \sum_{j = 0}^{N-1} c_{n_j}(n_j)_r x^{n_j-M},
\]
and hence $f^{(r)}(x)\geq0$.  This proves the characterization.

Finally, in case (2), $f[-]$ does not preserve positivity in dimension $N+1$. Indeed, with
\[
  \widetilde v_\alpha = (1,\alpha,\ldots,\alpha^N)^\top,
\]
choose $0\ne z\perp\widetilde v_{n_0},\ldots,\widetilde v_{n_{N-1}}$. Since the $N+1$ exponents $n_0,\ldots,n_{N-1},M$ are distinct, $z^\top\widetilde v_M\ne0$, and therefore
\[
  z^\top\mathcal{H}_{N+1}(f;x)z = c'x^M(z^\top\widetilde v_M)^2<0.
\]
The necessary direction of Theorem~\ref{thm:main} now gives the claimed failure on $\mathbb{P}_{N+1}((0,\rho))$.
\end{proof}

As a special case, we also recover a real version of Belton--Guillot--Khare--Putinar's characterization of polynomial preservers \cite[Theorem 1.1]{BeltonGuillotKharePutinar}.

\begin{corollary}
\label{thm:dimension-separation}
Fix $n\geq2$, an integer $M\geq n$, and $0<\rho<\infty$.  For $\gamma>0$ and $\mathbf{c} = (c_0,\dots,c_{n-1})\in(0,\infty)^n$, define $ F_{\mathbf{c},M,\gamma}(x) := \sum_{j = 0}^{n-1}c_jx^j-\gamma x^M.$ Then the following are equivalent:
\begin{enumerate}
\item
For every $A\in\mathbb{P}_n((0, \rho))$, one has $F_{\mathbf{c},M,\gamma}[A] \succeq 0$;
\item $\displaystyle \gamma^{-1} \geq \sum_{j = 0}^{n-1} \frac{1}{c_j} \binom{M}{j}^{\!2} \binom{M-j-1}{n-j-1}^{\!2} \rho^{M-j}.$
\end{enumerate}
\end{corollary}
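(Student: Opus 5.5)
This is a direct specialization of Theorem~\ref{thm:generalized-polynomial-preservers}, taking $N = n$, exponents $n_j = j$ for $0\leq j\leq n-1$, coefficients $c_{n_j} = c_j>0$, and $c' = -\gamma<0$ with $M\geq n>n-1$.

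\emph{Step 1: reduce to case (2).} Since $c'<0$, alternative (1) of that theorem is excluded, so preservation on $\mathbb{P}_n((0,\rho))$ is equivalent to alternative (2). The conditions $c_j>0$ hold by assumption. Each exponent $j$ is a nonnegative integer, so $j\in\mathcal{A}_n$. Hence (i) is equivalent to $\gamma\leq\mathcal{C}^{-1}$, that is, $\gamma^{-1}\geq\mathcal{C}$.

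\emph{Step 2: compute $\mathcal{C}$.} The remaining work is to identify
\[
  \frac{V(\mathbf{n}_j)^2}{V(\mathbf{n})^2} = \binom{M}{j}^{\!2}\binom{M-j-1}{n-j-1}^{\!2}.
\]
I would use the Lagrange coefficient from the proof of Theorem~\ref{thm:generalized-polynomial-preservers}:
\[
  a_j = \prod_{k\ne j}\frac{M-k}{j-k}, \qquad k\in\{0,\dots,n-1\}.
\]
\begin{itemize}
  \item The numerator is $\prod_{k=0}^{n-1}(M-k)/(M-j) = (M)_n/(M-j)$.
  \item The denominator is $\prod_{k<j}(j-k)\cdot\prod_{k>j}(j-k) = j!\,(-1)^{n-1-j}(n-1-j)!$.
\end{itemize}
Thus, up to sign,
\[
  |a_j| = \frac{M!}{(M-n)!\,(M-j)\,j!\,(n-1-j)!}.
\]
To match the binomials, write $\binom{M}{j} = \frac{M!}{j!(M-j)!}$ and $\binom{M-j-1}{n-j-1} = \frac{(M-j-1)!}{(n-j-1)!(M-n)!}$. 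Their product is
\[
  \frac{M!\,(M-j-1)!}{j!\,(M-j)!\,(n-j-1)!\,(M-n)!} = \frac{M!}{j!\,(M-j)\,(n-j-1)!\,(M-n)!},
\]
which equals $|a_j|$.

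\emph{Conclusion.} Substituting into \eqref{eq:generalized-polynomial-critical-constant} with $n_j = j$ gives exactly the sum in (ii). The only possible obstacle is this bookkeeping: the sign $(-1)^{n-1-j}$ disappears on squaring, and $M\geq n$ ensures that every factorial is well defined.
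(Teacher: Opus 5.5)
Your proposal is correct and follows the paper's intended argument. The paper presents this corollary as a direct specialization of Theorem~\ref{thm:generalized-polynomial-preservers} with $N=n$, $n_j=j$, $c'=-\gamma$, without writing out a proof, and your computation $|a_j|=\binom{M}{j}\binom{M-j-1}{n-j-1}$ supplies the Vandermonde bookkeeping it leaves implicit.
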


\section{Sharp quantitative applications}\label{sec:sharp-domination}

\subsection{Optimal domination under finite regularity}

A recurrent quantitative problem in entrywise positivity is to determine the smallest constant $K$ for which
\begin{equation}\label{eq:general-domination-problem}
  g[A]\preceq K h[A] \qquad\text{for every }A\in\mathbb{P}_n(I).
\end{equation}
Belton, Guillot, Khare, and Putinar formulated this as an extremal problem involving generalized Rayleigh quotients, and solved it for several special matrix pencils; see \cite[Sections~4 and~6]{BeltonGuillotKharePutinar}. Khare and Tao later obtained the exact value over \(\mathbb P_n^1(I)\) when \(h\) is a sum of \(n\) positive real powers and \(g\) is a single higher power, and showed that the same sharp threshold holds over \(\mathbb P_n(I)\) when the exponents of \(h\) belong to $\mathcal{A}_n$; see \cite[Theorems~1.11 and~1.12]{KhareTao}. For general analytic $g$, however, their Corollary~1.7 gives only the existence of a finite constant. They further suggested that analyticity should be replaceable by finite smoothness, but did not pursue this question.

This leaves two related issues: determining the optimal constant in \eqref{eq:general-domination-problem}, and carrying this out under finite regularity.  The next theorem answers both questions whenever the reference function $h$ lies in the strict part of the cone singled out by Theorem~\ref{thm:main}. To state the result, we denote by $\lambda_{\max}(B)$ the largest eigenvalue of a real symmetric matrix $B$.

\begin{theorem}\label{thm:sharp-smooth-domination}
Fix $n\geq1$, $0<\rho\leq\infty$ and $I = (0,\rho)$. Let $g, h\in C^{2n-2}(I)$, and suppose that for all $x\in I$, $\mathcal{H}_n(h;x)\succ0$ and $h^{(r)}(x)>0$ for all $0\leq r\leq n-1$. Define
\[
\gamma_n(h,g;x) := \lambda_{\max}\!\left( \mathcal{H}_n(h;x)^{-1/2}\mathcal{H}_n(g;x) \mathcal{H}_n(h;x)^{-1/2}\right)
\]
and
\begin{equation}\label{eq:sharp-smooth-domination-constant}
  \mathcal{K}_n(h,g;I) := \sup_{x\in I}\max\biggl\{0,\ \max_{0\leq r\leq n-1} \frac{g^{(r)}(x)}{h^{(r)}(x)},\ \gamma_n(h,g;x) \biggr\}.
\end{equation}
Then, for every $K\geq0$, the following assertions are equivalent:
\begin{enumerate}
    \item\label{item:smooth-domination-matrices} $g[A]\preceq K h[A]$ for every $A\in\mathbb{P}_n(I)$,
    \item\label{item:smooth-domination-constant} $K\geq\mathcal{K}_n(h,g;I)$.
\end{enumerate}
Therefore, a finite domination constant exists if and only if $\mathcal{K}_n(h,g;I)<\infty$.
\end{theorem}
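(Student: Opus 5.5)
Apply Theorem~\ref{thm:main} to the function $\phi_K := Kh-g\in C^{2n-2}(I)$. Assertion~\ref{item:smooth-domination-matrices} says exactly that $\phi_K[A]\succeq0$ for every $A\in\mathbb{P}_n(I)$. By the equivalence \ref{item:preserver}$\Leftrightarrow$\ref{item:conditions} of Theorem~\ref{thm:main}, this holds if and only if, for every $x\in I$,
\[
  Kh^{(r)}(x)-g^{(r)}(x)\geq0\quad(0\leq r\leq n-1)
  \qquad\text{and}\qquad
  K\mathcal{H}_n(h;x)-\mathcal{H}_n(g;x)\succeq0 .
\]
Here I use that $\mathcal{H}_n$ is linear in $f$, since $\mathcal{E}$ is a linear operator. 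The whole proof is then the pointwise translation of these two families of inequalities into conditions on $K$.

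For the ordinary derivative inequalities, $h^{(r)}(x)>0$ by hypothesis, so the $r$th inequality at $x$ is equivalent to $K\geq g^{(r)}(x)/h^{(r)}(x)$.

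For the Euler--Hankel inequality, $\mathcal{H}_n(h;x)\succ0$ by hypothesis, so it has a positive definite inverse square root. The inequality is preserved under the congruence by $\mathcal{H}_n(h;x)^{-1/2}$, so it is equivalent to
\[
  KI_n\succeq\mathcal{H}_n(h;x)^{-1/2}\mathcal{H}_n(g;x)\mathcal{H}_n(h;x)^{-1/2},
\]
that is, to $K\geq\gamma_n(h,g;x)$. Collecting these conditions over all $x\in I$ and all $r$, and adding $K\geq0$ (which is part of the standing assumption on $K$, hence the $0$ in the maximum), shows that \ref{item:smooth-domination-matrices} is equivalent to $K\geq\mathcal{K}_n(h,g;I)$.

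The final sentence then follows. If $\mathcal{K}_n(h,g;I)<\infty$, then $K=\mathcal{K}_n(h,g;I)$ works. If it is infinite, no real $K\geq0$ satisfies \ref{item:smooth-domination-constant}, so none satisfies \ref{item:smooth-domination-matrices}.

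The only point needing care is the case $n=1$. There $\mathcal{H}_1(h;x)=h(x)$ and $\gamma_1=g(x)/h(x)$, which coincides with the $r=0$ ratio, so Theorem~\ref{thm:main} still applies verbatim. I expect no real obstacle: the substance lies entirely in Theorem~\ref{thm:main}, and the strictness hypotheses on $h$ are what make the pointwise divisions and the inverse square root legitimate.
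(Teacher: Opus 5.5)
Your proposal is correct and follows the paper's proof essentially verbatim. Both apply Theorem~\ref{thm:main} to $Kh-g$. Both then use $h^{(r)}>0$ and congruence by $\mathcal{H}_n(h;x)^{-1/2}$ to turn the pointwise conditions into $K\geq g^{(r)}(x)/h^{(r)}(x)$ and $K\geq\gamma_n(h,g;x)$, before taking the supremum over $x\in I$.
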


\begin{proof}
For $K\geq0$, set $q_K := Kh-g$.  Theorem~\ref{thm:main} shows that $g[A]\preceq K h[A]$ for every $A\in\mathbb{P}_n(I)$ if and only if, for every $x\in I$,
\begin{equation}\label{eq:smooth-domination-differential-tests}
  K h^{(r)}(x)-g^{(r)}(x)\geq0 \qquad (0\leq r\leq n-1)
\end{equation}
and
\begin{equation}\label{eq:smooth-domination-hankel-test}
  K\mathcal{H}_n(h;x)-\mathcal{H}_n(g;x) \succeq 0.
\end{equation}
Since $h^{(r)}(x)>0$ for all $0\leq r\leq n-1$, condition \eqref{eq:smooth-domination-differential-tests} is equivalent to
\[
  K\geq \max_{0\leq r\leq n-1} \frac{g^{(r)}(x)}{h^{(r)}(x)}.
\]
Since $\mathcal{H}_n(h;x)\succ0$, congruence by $\mathcal{H}_n(h;x)^{-1/2}$ makes \eqref{eq:smooth-domination-hankel-test} equivalent to
\[
  K\geq\gamma_n(h,g;x).
\]
Taking the supremum on $x\in I$ and recalling that $K\geq0$ proves the result.
\end{proof}

\begin{remark}\label{rem:smooth-domination-literature}
Theorem~\ref{thm:sharp-smooth-domination} gives both an existence criterion and the optimal constant.  In particular, it gives a precise answer to the finite-regularity problem raised by Khare and Tao after \cite[Corollary~1.7]{KhareTao}: under the natural nondegeneracy conditions $\mathcal{H}_n(h;x)\succ0$ and $h^{(r)}(x)>0$ for all $0\leq r\leq n-1$, $C^{2n-2}$ regularity suffices, and a finite domination constant exists exactly when the explicit quantity \eqref{eq:sharp-smooth-domination-constant} is finite. 
\end{remark}

\subsection{Absolutely monotonic functions}

In the spirit of \cite[Theorem 1.3]{BeltonGuillotKharePutinar}, we derive the optimal domination constant $K$ in Equation \eqref{eq:general-domination-problem} in the case where $g$ and $h$ are absolutely monotonic functions. 

\begin{theorem}\label{thm:sharp-analytic-domination} 
Fix $n\geq1$ and $0<\rho<\infty$, and let $h(x) = \sum_{m = 0}^\infty b_mx^m$ and $g(x) = \sum_{m = 0}^\infty a_mx^m$ be real analytic on a neighborhood of $[0,\rho]$. Suppose that $a_m,b_m\geq0$ for all $m\geq 0$, that $b_m>0$ for at least $n$ distinct values of $m$, and that $b_m = 0\implies a_m = 0.$ Set $S := \{m\in\mathbb N_0:b_m>0\}.$ For $m\in S$, define $q_m := \frac{a_m}{b_m},$ and suppose that $(q_m)_{m\in S}$ is nondecreasing. Then $\mathcal{H}_n(h;\rho)\succ0$, the constant 
\[
  K_{n,\rho}(h,g) := \lambda_{\max}\!\left( \mathcal{H}_n(h;\rho)^{-1/2} \mathcal{H}_n(g;\rho) \mathcal{H}_n(h;\rho)^{-1/2} \right)
\]
is well-defined, and the following are equivalent for $K \geq 0$:
\begin{enumerate}
    \item $g[A]\preceq Kh[A]$ for every $A\in\mathbb{P}_n((0,\rho))$;
    \item $K\geq K_{n,\rho}(h,g)$.
\end{enumerate}
\end{theorem}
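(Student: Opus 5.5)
The plan is to deduce the statement from Theorem~\ref{thm:sharp-smooth-domination}, using the explicit structure of Euler derivatives on power series. For $x$ in a neighbourhood of $[0,\rho]$ we have $\mathcal{E}^k x^m = m^k x^m$. Termwise differentiation is legitimate because the series converge absolutely there and $m^{2n}x^m$ stays summable slightly beyond $\rho$. Hence, with $v_m := (1,m,\dots,m^{n-1})^\top$,
\[
  \mathcal{H}_n(h;x) = \sum_{m\in S} b_m x^m v_mv_m^\top,\qquad \mathcal{H}_n(g;x) = \sum_{m\in S} a_m x^m v_mv_m^\top,
\]
where we use $a_m = 0$ for $m\notin S$. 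Since $S$ contains $n$ distinct integers, the corresponding $v_m$ form an invertible Vandermonde matrix, so $\mathcal{H}_n(h;x)\succ0$ for all $x\in(0,\rho]$, in particular at $\rho$. Likewise $h^{(r)}(x) = \sum_m b_m (m)_r x^{m-r} > 0$ for $0\leq r\leq n-1$, because $S$ contains some $m\geq n-1\geq r$. Thus $h$ satisfies the strict hypotheses of Theorem~\ref{thm:sharp-smooth-domination}, and $K_{n,\rho}(h,g)$ is well defined. It remains to show that $\mathcal{K}_n(h,g;I) = K_{n,\rho}(h,g)$.

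The lower bound $\mathcal{K}_n(h,g;I)\geq K_{n,\rho}(h,g)$ follows from continuity. The function $\gamma_n(h,g;x)$ is continuous on $(0,\rho]$, since both Hankel matrices are continuous and $\mathcal{H}_n(h;\cdot)$ stays invertible, so $\sup_{x<\rho}\gamma_n(h,g;x)\geq \gamma_n(h,g;\rho)$. Together with $\mathcal{K}_n\geq0$, this shows that (i) implies (ii).

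For the reverse inequality, fix $K\geq \max\{0,K_{n,\rho}(h,g)\}$ and set $F := Kh-g = \sum_m c_m x^m$ with $c_m = b_m(K-q_m)$. Since $(q_m)$ is nondecreasing, there is a threshold $m_0$ such that $c_m\geq0$ for $m<m_0$ and $c_m\leq 0$ for $m\geq m_0$. This single sign change is the key structure.

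\emph{Hankel condition.} Write $x = s\rho$ with $0<s<1$. Then
\[
  s^{-m_0}\bigl(K\mathcal{H}_n(h;x)-\mathcal{H}_n(g;x)\bigr) = \sum_m c_m\rho^m s^{m-m_0} v_mv_m^\top .
\]
Relative to the same sum at $s=1$, the terms with $c_m\geq0$ are multiplied by $s^{m-m_0}\geq1$, and the terms with $c_m\leq0$ by $s^{m-m_0}\leq1$. The sum therefore dominates $K\mathcal{H}_n(h;\rho)-\mathcal{H}_n(g;\rho)$ in the Loewner order, and the latter is $\succeq0$ by the choice of $K$. Hence $K\geq \gamma_n(h,g;x)$ for every $x\in I$.

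\emph{Ordinary derivative ratios.} We first get $F\geq0$ on $(0,\rho)$. Its $(0,0)$ entry gives $F(x) = e_1^\top\bigl(K\mathcal{H}_n(h;x)-\mathcal{H}_n(g;x)\bigr)e_1\geq0$ for all $x\in(0,\rho]$, where $e_1$ is the first standard basis vector. Now fix $r\leq n-1$ and put $w_m := (m)_r$, which is nonnegative and nondecreasing on $\mathbb N_0$. Comparing each term with $w_{m_0}$ gives
\[
  x^rF^{(r)}(x) = \sum_m c_m w_m x^m \geq w_{m_0}\sum_m c_m x^m = w_{m_0}F(x)\geq0 .
\]
This holds because the positive terms carry weights $w_m\leq w_{m_0}$ and the negative terms carry weights $w_m\geq w_{m_0}$. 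Hence $Kh^{(r)}\geq g^{(r)}$, that is, $K\geq g^{(r)}/h^{(r)}$ on $I$. Therefore $\mathcal{K}_n(h,g;I)\leq \max\{0,K_{n,\rho}(h,g)\}$, and Theorem~\ref{thm:sharp-smooth-domination} gives (ii)$\Rightarrow$(i) for every $K\geq0$.

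The main obstacle I expect is this second, derivative step. The Euler--Hankel positivity at $\rho$ does not directly control $f^{(r)}$, which is not a quadratic form in the Hankel matrix. The resolution is to exploit the one sign change of $(c_m)$ together with the monotonicity of the falling factorials $(m)_r$, after first securing $F\geq0$ via the monotone rescaling argument.
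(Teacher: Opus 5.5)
\textbf{Gap in the derivative step.} The inequality $x^rF^{(r)}(x)\ge w_{m_0}F(x)$ is false; it goes the other way. For $m<m_0$ you have $c_m\ge0$ and $w_m\le w_{m_0}$. For $m\ge m_0$ you have $c_m\le0$ and $w_m\ge w_{m_0}$. In both cases $c_mw_m\le c_mw_{m_0}$, so in fact $x^rF^{(r)}(x)\le w_{m_0}F(x)$. Multiplying by the nondecreasing weights $(m)_r$ amplifies the negative high-order tail. Hence the two facts you use cannot give $F^{(r)}\ge0$: these are $F\ge0$ on $(0,\rho)$ and a single $+\to-$ sign change in the coefficients. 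For instance, $F(x)=1-\varepsilon x^M$ with $0<\varepsilon\rho^M\le1$ has both properties, yet $F'<0$ on $(0,\rho)$. The $(0,0)$ entry of the Hankel inequality is too weak to control the ordinary derivatives.

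\textbf{Repair.} The paper does this via the weighted Chebyshev inequality and the test polynomial $(t)_r$. Use the full Loewner inequality $K\mathcal{H}_n(h;x)-\mathcal{H}_n(g;x)\succeq0$, which your rescaling argument already gives on $(0,\rho]$. Test it against the vector $u$ with $u^\top v_m=(m)_r$. This is admissible because $(t)_r$ has degree $r\le n-1$, and it yields $\sum_m c_m(m)_r^2x^m\ge0$.

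Now run your sign-change comparison, in the correct direction, on $d_m:=c_m(m)_rx^m$. This sequence has the same sign pattern as $c_m$, so
\[
0\le\sum_m d_mw_m\le w_{m_0}\sum_m d_m=w_{m_0}\,x^rF^{(r)}(x).
\]
If $w_{m_0}>0$, this gives $F^{(r)}(x)\ge0$. If $w_{m_0}=0$, then $m_0<r$. So $d_m=0$ for $m<m_0$ and $d_mw_m\le0$ for $m\ge m_0$. Together with $\sum_m d_mw_m\ge0$, this forces $c_m(m)_r^2=0$ for all $m$, hence every $d_m=0$.

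Also note that if $q_m\le K$ for all $m\in S$ there is no threshold $m_0$. In that case all $c_m\ge0$ and everything is immediate.

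\textbf{Remaining steps.} The rest of your proposal is correct: the positivity of $\mathcal{H}_n(h;x)$ and $h^{(r)}$, the continuity argument for (i)$\Rightarrow$(ii), and the rescaling $x=s\rho$ for the Hankel condition. The rescaling is a neat substitute for the paper's route, which proves that each Rayleigh quotient $R_p(x)$ is nondecreasing by differentiating it. Your version compares directly with the endpoint, using only the single sign change of $(c_m)$.
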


\begin{proof}
For $m\geq0$, set $v_m = (1,m,\ldots,m^{n-1})^\top.$ Since $\left(x\frac{d}{dx}\right)^r x^m = m^rx^m,$ we have, for $0<x\leq\rho$,
\begin{equation}\label{eq:analytic-Hankel-expansions}
  \mathcal{H}_n(h;x) = \sum_{m = 0}^\infty b_mx^m v_mv_m^\top, \qquad \mathcal{H}_n(g;x) = \sum_{m = 0}^\infty a_mx^m v_mv_m^\top.
\end{equation}
The series converge at $x = \rho$ by the analyticity assumption. Since $b_m>0$ for at least $n$ distinct indices, the corresponding vectors $v_m$ span $\mathbb R^n$ by the Vandermonde determinant formula. Hence
\[
  \mathcal{H}_n(h;x)\succ0 \qquad (0<x\leq\rho).
\]

Moreover, for every $0\leq r\leq n-1$, there is an $m\in S$ with $m\geq r$. Indeed, $S$ contains at least $n$ distinct nonnegative integers. Therefore
\[
  h^{(r)}(x) = \sum_{m = 0}^\infty b_m(m)_r x^{m-r}>0, \qquad 0<x\leq\rho.
\]
Thus Theorem~\ref{thm:sharp-smooth-domination} applies.

We first show that the function $x\longmapsto\gamma_n(h,g;x)$ is nondecreasing on $(0,\rho]$. By the Rayleigh quotient formula and \eqref{eq:analytic-Hankel-expansions},
\begin{align}
  \gamma_n(h,g;x)
  &= 
  \sup_{0\neq u\in\mathbb R^n}
  \frac{u^\top\mathcal{H}_n(g;x)u}
       {u^\top\mathcal{H}_n(h;x)u} = 
  \sup_{\substack{0\neq p\in\mathbb R[t]\\
                   \deg p\leq n-1}}
  \frac{\sum_{m = 0}^\infty a_m p(m)^2x^m}
       {\sum_{m = 0}^\infty b_m p(m)^2x^m}.
  \label{eq:gamma-polynomial-Rayleigh}
\end{align}
Here $p(m) = u^\top v_m$. Since $a_m = 0$ if $m\notin S$, the quotient on the right may be written as
\[
  R_p(x) := \frac{\sum_{m\in S}q_m b_m p(m)^2x^m}{\sum_{m\in S}b_m p(m)^2x^m}.
\]
Its denominator is strictly positive for every nonzero polynomial $p$ of degree at most $n-1$, since $S$ contains at least $n$ distinct points. Differentiating then gives
\[
  xR_p'(x) = \frac{\sum_{m,k\in S} b_mb_k p(m)^2p(k)^2x^{m+k} (q_m-q_k)(m-k)}
  {2\bigl( \sum_{m\in S}b_mp(m)^2x^m \bigr)^2} \geq 0,
\]
because $q_m$ is nondecreasing in $m$. Thus every $R_p$ is nondecreasing, and hence so is their supremum. Consequently,
\begin{equation}\label{eq:gamma-endpoint}
  \sup_{0<x<\rho}\gamma_n(h,g;x) = \gamma_n(h,g;\rho).
\end{equation}

We next show that $\gamma_n(h,g;x)$ controls all the ordinary derivative ratios. Fix $0\leq r\leq n-1$. Since
\[
  g^{(r)}(x) = x^{-r}\sum_{m\in S}q_m b_m(m)_r x^m, \qquad h^{(r)}(x) = x^{-r}\sum_{m\in S}b_m(m)_r x^m,
\]
we obtain
\[
  \frac{g^{(r)}(x)}{h^{(r)}(x)} = \frac{\sum_{m\in S}q_m b_m(m)_r x^m}{\sum_{m\in S}b_m(m)_r x^m}.
\]
The sequences $m\longmapsto q_m$ and $m\longmapsto(m)_r$ are both nondecreasing on $S$. Hence, the weighted Chebyshev inequality \cite[Theorem~43]{HardyLittlewoodPolya}, applied with weights $w_m=b_m(m)_rx^m$, gives
\[
  \frac{\sum_{m\in S}q_m b_m(m)_r x^m}{\sum_{m\in S}b_m(m)_r x^m} \leq \frac{\sum_{m\in S}q_m b_m(m)_r^2x^m}{\sum_{m\in S}b_m(m)_r^2x^m}.
\]
Now the polynomial $p_r(t) := (t)_r$ has degree $r\leq n-1$, and hence is an admissible test polynomial in \eqref{eq:gamma-polynomial-Rayleigh}. Therefore
\begin{equation}\label{eq:derivatives-controlled-by-gamma}
  \frac{g^{(r)}(x)}{h^{(r)}(x)} \leq \frac{\sum_{m\in S}a_m(m)_r^2x^m}{\sum_{m\in S}b_m(m)_r^2x^m} \leq \gamma_n(h,g;x).
\end{equation}

Finally, since $a_m\geq0$, Equation \eqref{eq:analytic-Hankel-expansions} gives $\mathcal{H}_n(g;x)\succeq0,$ and hence $\gamma_n(h,g;x)\geq0$. Thus Theorem~\ref{thm:sharp-smooth-domination}, along with Equations \eqref{eq:gamma-endpoint} and \eqref{eq:derivatives-controlled-by-gamma} yield
\begin{align*}
  \mathcal{K}_n(h,g;(0,\rho)) &= \sup_{0<x<\rho} \max\biggl\{ 0,\ \max_{0\leq r\leq n-1} \frac{g^{(r)}(x)}{h^{(r)}(x)},\ \gamma_n(h,g;x) \biggr\}\\
  &= \sup_{0<x<\rho}\gamma_n(h,g;x) = \gamma_n(h,g;\rho) = K_{n,\rho}(h,g).
\end{align*}
The result now follows from Theorem~\ref{thm:sharp-smooth-domination}.
\end{proof}

\subsection{Continuous mixtures and a single negative power}

We now consider a special case in which the sharp constant has a particularly simple form. For $n\geq2$ and $\alpha\in\mathbb{R}$, let $\mathcal{A}_n := \mathbb{N}_0\cup[n-2,\infty)$ and $v_\alpha := (1,\alpha,\ldots,\alpha^{n-1})^\top$. Given a compactly supported finite positive Borel measure $\mu$, define
\begin{equation}\label{eq:power-mixture-definitions}
  F_\mu(x) := \int_{\mathbb{R}}x^\alpha\,\mathrm{d}\mu(\alpha), \qquad G_\mu(x) := \int_{\mathbb{R}}x^\alpha v_\alpha v_\alpha^\top \,\mathrm{d}\mu(\alpha).
\end{equation}
Compactness of the support permits differentiation under the integral. Hence
\[
F_\mu^{(r)}(x) = \int_{\mathbb{R}}(\alpha)_r x^{\alpha-r}\,\mathrm{d}\mu(\alpha), \qquad \mathcal{H}_n(F_\mu;x) = G_\mu(x),
\]

Theorems~1.11 and~1.12 of Khare and Tao \cite{KhareTao}, generalizing earlier work of Belton, Guillot, Khare, and Putinar \cite{BeltonGuillotKharePutinar}, determine how much of a single higher power can be subtracted from a positive sum of $n$ powers. We extend this to positive mixtures of powers: when all the exponents occurring in $F_\mu$ are smaller than $M$, we determine the largest constant $c\geq0$ such that
\[
  F_\mu[A]-cA^{\circ M}\succeq0 \qquad \text{for every }A\in\mathbb{P}_n(I).
\]
The answer depends only on $M$, $I$, and the matrix $G_\mu(x)$ defined in \eqref{eq:power-mixture-definitions}.

\begin{corollary}\label{cor:single-power-mixture-threshold}
Fix $n\geq2$ and $M>n-2$. Let $\mu$ be a finite positive Borel measure with compact support contained in $\mathcal{A}_n\cap[0,M).$ Suppose that $\operatorname{supp}\mu$ contains at least $n$ distinct points, and let $F_\mu$ and $G_\mu$ be defined by \eqref{eq:power-mixture-definitions}. Set
\[
c_*(\mu,M;I) = 
  \begin{cases}
      \displaystyle \frac{1}{\rho^M v_M^\top G_\mu(\rho)^{-1}v_M},
      & 0<\rho<\infty,\\[3mm]
      0,
      & \rho = \infty.
  \end{cases}
\]
Then $F_\mu(x)-cx^M$ preserves positivity on $\mathbb{P}_n(I)$ if and only if $c\leq c_*(\mu,M;I).$
\end{corollary}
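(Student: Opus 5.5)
Let $f_c(x) := F_\mu(x)-cx^M$. Since $\mu$ has compact support, $f_c$ is smooth on $I$, so Theorem~\ref{thm:main} applies. Preservation of $\mathbb{P}_n(I)$ is therefore equivalent to two conditions for all $x\in I$:
\[
  f_c^{(r)}(x)\geq0 \quad (0\leq r\leq n-1), \qquad \mathcal{H}_n(f_c;x) = G_\mu(x)-cx^Mv_Mv_M^\top\succeq0 .
\]
For $c\leq 0$ both hold trivially. Indeed, $\mathcal{H}_n(f_c;x)$ is then a sum of positive semidefinite terms. Also $(\alpha)_r\geq0$ for $\alpha\in\mathcal{A}_n$ and $r\leq n-1$, and $(M)_r\geq0$ because $M>n-2$ gives $M\in\mathcal{A}_n$. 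So I will take $c>0$ from now on.

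\emph{Euler--Hankel condition.} Because $\operatorname{supp}\mu$ contains at least $n$ distinct points, the vectors $v_\alpha$ with $\alpha$ in the support span $\mathbb{R}^n$, so $G_\mu(x)\succ0$ for every $x>0$. The rank-one perturbation criterion then gives
\[
  \mathcal{H}_n(f_c;x)\succeq0 \iff c\,\phi(x)\leq 1, \qquad \phi(x) := x^Mv_M^\top G_\mu(x)^{-1}v_M .
\]
Next I would show that $\phi$ is nondecreasing. By the variational formula for $v^\top G^{-1}v$,
\[
  \phi(x)^{-1} = \inf_{u^\top v_M=1} \int x^{\alpha-M}(u^\top v_\alpha)^2\,\mathrm{d}\mu(\alpha).
\]
Every $\alpha$ in the support satisfies $\alpha<M$, so each integrand is nonincreasing in $x$. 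Hence $\phi^{-1}$, an infimum of nonincreasing functions, is nonincreasing, and $\phi$ is nondecreasing.

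It follows that $\sup_{x\in I}\phi(x)=\phi(\rho)$ when $\rho<\infty$, using continuity of $\phi$ up to $\rho$, which holds since $G_\mu$ is continuous and invertible at $\rho$. When $\rho=\infty$, I would show $\phi(x)\to\infty$, so that only $c\leq0$ survives. To see this, take $u$ orthogonal to $v_\alpha$ for $n-1$ distinct support points $\alpha$ lying near the top of the support. Then the infimum above is at most a quantity of order $x^{\beta-M}\to0$, where $\beta<M$ is the supremum of the support. Since the support is compact and contained in $[0,M)$, we do have $\beta<M$. I have not fully worked out this choice of $u$; the idea is to reduce to an $n$-point subfamily that contains the top of the support, much as in the proof of Theorem~\ref{thm:generalized-polynomial-preservers}.

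\emph{Ordinary derivative conditions.} This is the main obstacle: I need to show that $c\leq c_*$ forces $f_c^{(r)}\geq0$. I would imitate the Cauchy--Schwarz argument from Theorem~\ref{thm:generalized-polynomial-preservers}. Fix $r\leq n-1$ and set $p(t):=(t)_r$, a polynomial of degree at most $n-1$, so that $p(\alpha)=w^\top v_\alpha$ for a fixed vector $w$. Then
\[
  (M)_r = w^\top v_M \leq \bigl(v_M^\top G_\mu(x)^{-1}v_M\bigr)^{1/2}\bigl(w^\top G_\mu(x)w\bigr)^{1/2}.
\]
Moreover,
\[
  w^\top G_\mu(x)w=\int x^\alpha(\alpha)_r^2\,\mathrm{d}\mu \leq (M)_r\int x^\alpha(\alpha)_r\,\mathrm{d}\mu .
\]
This holds because $0\leq(\alpha)_r\leq(M)_r$ for $\alpha\in\mathcal{A}_n$ with $\alpha<M$; I would check this monotonicity of $(t)_r$ on $\mathcal{A}_n$, which is the same fact used earlier. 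Combining the two displays gives
\[
  c(M)_r x^{M-r} \leq c\,\phi(x)\,x^{-r}\int x^\alpha(\alpha)_r\,\mathrm{d}\mu \leq F_\mu^{(r)}(x)
\]
whenever $c\phi(x)\leq1$. This says exactly $f_c^{(r)}(x)\geq0$, so the derivative conditions follow from the Euler--Hankel condition and the characterization $c\leq c_*(\mu,M;I)$ results.
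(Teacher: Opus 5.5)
Your proposal is correct and follows essentially the same route as the paper: the rank-one perturbation criterion, the variational (Cauchy--Schwarz) formula for $1/(x^Mv_M^\top G_\mu(x)^{-1}v_M)$ to get monotonicity in $x$, and the test polynomial $(\alpha)_r$ with $0\le(\alpha)_r\le(M)_r$ to deduce the derivative inequalities from the Euler--Hankel condition. The $\rho=\infty$ step you left unfinished needs no special choice of $u$: any fixed $u$ with $u^\top v_M=1$ works because $(u^\top v_\alpha)^2$ is bounded on the compact support, and the paper simply takes $u=e_1$ (i.e.\ $p\equiv1$) to get $\phi(x)^{-1}\le\mu(\mathbb{R})x^{\beta-M}\to0$ for $x\ge1$.
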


\begin{proof}
We first verify that $G_\mu(x)$ is positive definite. For $z\in\mathbb{R}^n$,
\[
  z^\top G_\mu(x)z = \int_{\mathbb{R}}x^\alpha(z^\top v_\alpha)^2 \,\mathrm{d}\mu(\alpha).
\]
If $z\neq0$, then $\alpha\mapsto z^\top v_\alpha$ is a nonzero polynomial of degree at most $n-1$. Since $\operatorname{supp}\mu$ contains at least $n$ distinct points, this polynomial cannot vanish on all of $\operatorname{supp}\mu$. Consequently, $z^\top G_\mu(x)z>0$, and hence $G_\mu(x)\succ0$.

For $b\in\mathbb{R}^n$ satisfying $b^\top v_M = 1$, the Cauchy--Schwarz inequality gives
\[
 1 = (b^\top v_M)^2 \leq \bigl(b^\top G_\mu(x)b\bigr) \bigl(v_M^\top G_\mu(x)^{-1}v_M\bigr),
\]
with equality when $b$ is proportional to $G_\mu(x)^{-1}v_M$. Therefore
\[
  \frac{1}{v_M^\top G_\mu(x)^{-1}v_M} = \min_{b\in\mathbb{R}^n,\,b^\top v_M = 1} b^\top G_\mu(x)b.
\]
Writing $p(\alpha) = b^\top v_\alpha$, the constraint $b^\top v_M = 1$ becomes $p(M) = 1$. Hence,
\begin{equation}\label{eq:mixture-threshold-minimization}
  \frac{1}{x^M v_M^\top G_\mu(x)^{-1}v_M} = \min_{\substack{p\in\mathbb{R}[\alpha],\ \deg p\leq n-1\\p(M) = 1}} \int_{\mathbb{R}}p(\alpha)^2x^{\alpha-M}\, \mathrm{d}\mu(\alpha).
\end{equation}
Since $\alpha<M$ on $\operatorname{supp}\mu$, the right-hand side is nonincreasing in $x$. Therefore, if $0<\rho<\infty$, continuity gives
\[
  \inf_{x\in(0,\rho)} \frac{1}{x^M v_M^\top G_\mu(x)^{-1}v_M} = \frac{1}{\rho^M v_M^\top G_\mu(\rho)^{-1}v_M}.
\]
If $\rho = \infty$, let $\alpha_{\max} := \max\operatorname{supp}\mu<M$. Taking $p\equiv1$ in \eqref{eq:mixture-threshold-minimization}, we obtain
\[
  0 \leq \frac{1}{x^M v_M^\top G_\mu(x)^{-1}v_M} \leq \int_{\mathbb{R}}x^{\alpha-M}\,\mathrm{d}\mu(\alpha) \leq \mu(\mathbb{R})x^{\alpha_{\max}-M} \qquad (x\geq1).
\]
The last expression tends to zero as $x\to\infty$. Thus, in both cases,
\begin{equation}\label{eq:mixture-threshold-infimum}
  c_*(\mu,M;I) = \inf_{x\in I} \frac{1}{x^M v_M^\top G_\mu(x)^{-1}v_M}.
\end{equation}

Now, for every $x\in I$, we have
\begin{equation}\label{eq:mixture-minus-power-hankel}
  \mathcal{H}_n(F_\mu-cx^M;x) = G_\mu(x)-cx^M v_Mv_M^\top.
\end{equation}
Since $G_\mu(x)$ is positive definite, multiplying the matrix on the right and left by $G_\mu(x)^{-1/2}$ shows that $\mathcal{H}_n(F_\mu-cx^M;x) \succeq 0$ if and only if
\begin{equation}\label{eq:mixture-minus-power-hankel-threshold}
  cx^M v_M^\top G_\mu(x)^{-1}v_M\leq1.
\end{equation}
It follows from Equation \eqref{eq:mixture-threshold-infimum} that the Euler--Hankel condition holds for every $x\in I$ exactly when $c\leq c_*(\mu,M;I).$

We show that this bound also implies the required ordinary derivative inequalities. Fix $0\leq r\leq n-1$ and define $p_r(\alpha) := \frac{1}{(M)_r}(\alpha)_r.$ Then $p_r$ has degree at most $n-1$ and $p_r(M) = 1$. Since $\operatorname{supp}\mu\subset\mathcal{A}_n\cap[0,M)$ and $M>n-2$, we have $0\leq p_r(\alpha)\leq1$ on $\operatorname{supp}\mu$. Using \eqref{eq:mixture-threshold-minimization}, we obtain
\begin{align*}
  \frac{1}{v_M^\top G_\mu(x)^{-1}v_M} \leq \int_{\mathbb{R}}p_r(\alpha)^2x^\alpha\,\mathrm{d}\mu(\alpha) \leq \int_{\mathbb{R}}p_r(\alpha)x^\alpha\,\mathrm{d}\mu(\alpha) = \frac{x^rF_\mu^{(r)}(x)}{(M)_r}.
\end{align*}
If $c\leq c_*(\mu,M;I)$, then
\[
  cx^M \leq \frac{1}{v_M^\top G_\mu(x)^{-1}v_M}
\]
for every $x\in I$. Hence
\[
  F_\mu^{(r)}(x)-c(M)_r x^{M-r} \geq 0, \qquad 0\leq r\leq n-1.
\]
Together with \eqref{eq:mixture-minus-power-hankel} and \eqref{eq:mixture-minus-power-hankel-threshold}, Theorem~\ref{thm:main} proves sufficiency.

Conversely, if $F_\mu-cx^M$ preserves positivity, the necessary direction of Theorem~\ref{thm:main} gives \eqref{eq:mixture-minus-power-hankel-threshold} for every $x\in I$. It follows that $c\leq c_*(\mu,M;I)$.
\end{proof}

\begin{remark}[Recovery of the finite-sum case]\label{rem:atomic-christoffel-threshold}
Suppose that $0<\rho<\infty$ and $\mu = \sum_{j = 0}^{n-1}c_{n_j}\delta_{n_j},$ where $n_j\in\mathcal{A}_n$, $0\leq n_0<\cdots<n_{n-1}<M$, and $c_{n_j}>0$. A direct computation yields
\[
  v_M^\top G_\mu(x)^{-1}v_M = \sum_{j = 0}^{n-1}\frac{1}{c_{n_j}x^{n_j}} \Bigg( \prod_{k\neq j} \frac{M-n_k}{n_j-n_k} \Bigg)^{\!\smash{2}}.
\]
Hence Corollary~\ref{cor:single-power-mixture-threshold} gives
\[
  c_*(\mu,M;(0,\rho)) =  \Bigg(\sum_{j = 0}^{n-1} \Bigg( \prod_{k\neq j} \frac{M-n_k}{n_j-n_k} \Bigg)^{\!\smash{2}}\frac{\rho^{M-n_j}}{c_{n_j}} \Bigg)^{\!\smash{-1}},
\]
which is precisely the threshold in Theorem~\ref{thm:generalized-polynomial-preservers} above. In the corresponding previously known cases, this recovers the sharp thresholds of Khare--Tao \cite[Theorems~1.11 and~1.12]{KhareTao} and Belton--Guillot--Khare--Putinar \cite[Theorem~1.1]{BeltonGuillotKharePutinar}.
\end{remark}

\subsection{Nonexistence of a finite interior test set}

Khare and Tao showed that the coefficient condition in Theorem~\ref{thm:generalized-polynomial-preservers} and Remark~\ref{rem:atomic-christoffel-threshold} can be detected by testing determinants on two sequences of rank-one matrices, and asked whether finitely many matrices could replace them; see \cite[Remark~8.5]{KhareTao}. The dependence of the sharp constant on $\rho$ gives a negative answer when the test matrices are fixed and have entries strictly inside $I$.

\begin{proposition}\label{prop:no-finite-test-set}
Fix $n\geq2$ and real powers $n_0<\cdots<n_{n-1}<M$ in the set $\mathcal{A}_n$. Suppose that $0<\rho\leq \infty$, $c_{n_0},\ldots,c_{n_{n-1}}>0$ and, for $c'\in\mathbb{R}$, define $f_{c'}(x) := \sum_{j = 0}^{n-1}c_{n_j}x^{n_j}+c'x^M.$ Then, for every finite collection $A_1,\ldots,A_L\in\mathbb{P}_n((0,\rho)),$ there exists $c'\in\mathbb{R}$ such that
\[
  f_{c'}[A_\ell]\succeq0, \qquad 1\leq\ell\leq L,
\]
but $f_{c'}[-]$ does not preserve positivity on $\mathbb{P}_n((0,\rho))$.
\end{proposition}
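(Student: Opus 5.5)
The plan is to exploit the dependence of the sharp threshold on the interval. Given the finite collection $A_1,\dots,A_L$, all entries of these matrices lie in a compact subset of $(0,\rho)$. So there is $\rho'<\rho$ with every entry of every $A_\ell$ in $(0,\rho')$; when $\rho=\infty$ we take $\rho'$ finite. Thus $A_\ell\in\mathbb{P}_n((0,\rho'))$ for all $\ell$. We then choose $c'$ to be a negative coefficient that is admissible on $(0,\rho')$ but not on $(0,\rho)$.

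Concretely, define $\mathcal{C}(r):=\sum_{j=0}^{n-1}\frac{V(\mathbf{n}_j)^2}{V(\mathbf{n})^2}\frac{r^{M-n_j}}{c_{n_j}}$ for $r\in(0,\infty)$. Since $M>n_j$ for all $j$ and every summand has a positive coefficient, $r\mapsto\mathcal{C}(r)$ is strictly increasing, and $\mathcal{C}(r)\to\infty$ as $r\to\infty$. Note that $\frac{V(\mathbf{n}_j)^2}{V(\mathbf{n})^2}=a_j^2>0$ because the exponents are distinct. Set $c':=-\mathcal{C}(\rho')^{-1}<0$.

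First, by Theorem~\ref{thm:generalized-polynomial-preservers} with $N=n$ and $\rho$ replaced by $\rho'$, alternative (2) holds: all $c_{n_j}>0$, $n_j\in\mathcal{A}_n$, and $-\mathcal{C}(\rho')^{-1}\le c'<0$. Hence $f_{c'}[-]$ preserves $\mathbb{P}_n((0,\rho'))$, and in particular $f_{c'}[A_\ell]\succeq0$ for every $\ell$.

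Second, we show $f_{c'}$ does not preserve $\mathbb{P}_n((0,\rho))$.
\begin{itemize}
\item If $\rho<\infty$, then $\mathcal{C}(\rho)>\mathcal{C}(\rho')$, so $c'<-\mathcal{C}(\rho)^{-1}$. Both alternatives of Theorem~\ref{thm:generalized-polynomial-preservers} then fail: (1) fails because $c'<0$, and (2) fails because of the threshold.
\item If $\rho=\infty$, we argue via Theorem~\ref{thm:main} directly. The proof of Theorem~\ref{thm:generalized-polynomial-preservers} showed that, for $c'<0$, the Euler--Hankel condition at $x$ is equivalent to $-c'\sum_j a_j^2 x^{M-n_j}/c_{n_j}\le1$. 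The left side tends to $\infty$ as $x\to\infty$, so $\mathcal{H}_n(f_{c'};x)\not\succeq0$ for large $x$, and the necessary direction of Theorem~\ref{thm:main} gives failure. Equivalently, the case $\rho=\infty$ follows from Corollary~\ref{cor:single-power-mixture-threshold} with $\mu=\sum_j c_{n_j}\delta_{n_j}$, whose threshold is $0$.
\end{itemize}

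The only delicate point is the case $\rho=\infty$, where Theorem~\ref{thm:generalized-polynomial-preservers} is stated only for finite $\rho$. We handle it by the direct Euler--Hankel computation above rather than by the stated threshold. Everything else is monotonicity of $\mathcal{C}$ in $\rho$.
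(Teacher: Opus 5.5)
Your proposal is correct and follows the paper's argument: shrink to some $\rho'<\rho$ containing all test entries, take $c'$ in $[-\mathcal{C}(\rho')^{-1},-\mathcal{C}(\rho)^{-1})$ (you choose the left endpoint), and apply Theorem~\ref{thm:generalized-polynomial-preservers} at $\rho'$ for preservation on the test matrices and at $\rho$ for failure. Your separate handling of $\rho=\infty$ through the Euler--Hankel blow-up is extra care the paper omits, since its proof tacitly uses the convention $\mathcal{C}(\infty)^{-1}=0$ there.
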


\begin{proof}
For $0<s\leq\rho$, set
\begin{equation}\label{eq:finite-test-critical-constant}
  \mathcal{C}(s) :=  \sum_{j = 0}^{n-1} \Bigg( \prod_{k\neq j} \frac{M-n_k}{n_j-n_k} \Bigg)^{\!2}\frac{s^{M-n_j}}{c_{n_j}}.
\end{equation}
Fix $A_1,\ldots,A_L\in\mathbb{P}_n((0,\rho))$. Since the collection is finite and all its entries are strictly smaller than $\rho$, there exists $\rho'$ such that $\rho'<\rho$ and $A_\ell\in\mathbb{P}_n((0,\rho'))$ for all $1\leq\ell\leq L$. Every exponent $M-n_j$ in \eqref{eq:finite-test-critical-constant} is strictly positive. Therefore, $\mathcal{C}(\rho')<\mathcal{C}(\rho)$, and hence $ -\mathcal{C}(\rho')^{-1} < -\mathcal{C}(\rho)^{-1}.$ Choose $c'$ satisfying
\begin{equation}\label{eq:coefficient-between-thresholds}
  -\mathcal{C}(\rho')^{-1} \leq c' < -\mathcal{C}(\rho)^{-1}.
\end{equation}

Applying the coefficient condition in Theorem~\ref{thm:generalized-polynomial-preservers} with endpoint $\rho'$, the first inequality in \eqref{eq:coefficient-between-thresholds} shows that $f_{c'}[-]$ preserves positivity on $\mathbb{P}_n((0,\rho'))$. In particular, $f_{c'}[A_\ell]\succeq0$ for all $1\leq\ell\leq L$. 
On the other hand, the second inequality in \eqref{eq:coefficient-between-thresholds}, together with the same coefficient condition applied with endpoint $\rho$, shows that $f_{c'}[-]$ does not preserve positivity on $\mathbb{P}_n((0,\rho))$.
\end{proof}

\smallskip
\noindent{\bf AI disclosure statement.}
ChatGPT 5.6 Sol by OpenAI was used to explore proof strategies for this paper and help with its writing. In particular, it identified Karlin's criterion (Lemma~\ref{lem:karlin}, Corollary~\ref{cor:karlin-psd}), which was previously unknown to the authors. All mathematical arguments were independently verified by the authors, who take full responsibility for the content.


\begin{thebibliography}{99}

\bibitem{BeltonGuillotKharePutinar}
A. Belton, D. Guillot, A. Khare, and M. Putinar,
\emph{Matrix positivity preservers in fixed dimension. I},
Adv. Math. \textbf{298} (2016), 325--368.
\href{https://doi.org/10.1016/j.aim.2016.04.016}{doi:10.1016/j.aim.2016.04.016}.

\bibitem{BGKP-SurveyI}
A. Belton, D. Guillot, A. Khare, and M. Putinar,
\emph{A panorama of positivity. I: Dimen\-sion free},
in \emph{Analysis of Operators on Function Spaces: The Serguei Shimorin Memorial Volume},
A. Aleman, H. Hedenmalm, D. Khavinson, and M. Putinar, eds.,
Trends in Mathematics, Birkh\"auser, Cham, 2019, 117--165.
\href{https://doi.org/10.1007/978-3-030-14640-5_5}
{doi:10.1007/978-3-030-14640-5\_5}.

\bibitem{BGKP-SurveyII}
A. Belton, D. Guillot, A. Khare, and M. Putinar,
\emph{A panorama of positivity. II: Fixed dimension},
in \emph{Complex Analysis and Spectral Theory},
H. G. Dales, D. Khavinson, and J. Mashreghi, eds.,
Contemp. Math. \textbf{743},
Amer. Math. Soc., Providence, RI, 2020, 109--150.
\href{https://doi.org/10.1090/conm/743/14958}
{doi:10.1090/conm/743/14958}.

\bibitem{BGKP-hankel}
A. Belton, D. Guillot, A. Khare, and M. Putinar,
\emph{Moment-sequence transforms},
J. Eur. Math. Soc. \textbf{24} (2022), no.~9, 3109--3160.
\href{https://doi.org/10.4171/JEMS/1145}{doi:10.4171/JEMS/1145}.

\bibitem{BGKP-tn}
A. Belton, D. Guillot, A. Khare, and M. Putinar,
\emph{Totally positive kernels, P\'olya frequency functions, and their transforms},
J. d'Analyse Math. \textbf{150} (2023), no.~1, 83--158.
\href{https://doi.org/10.1007/s11854-022-0259-7}{doi:10.1007/s11854-022-0259-7}.

\bibitem{belton2023negativity}
A. Belton, D. Guillot, A. Khare, and M. Putinar,
\emph{Negativity-preserving transforms of tuples of symmetric matrices},
Proc. Lond. Math. Soc. \textbf{132} (2026), no.~4, e70147.
\href{https://doi.org/10.1112/plms.70147}{doi:10.1112/plms.70147}.

\bibitem{belton2026distance}
A.~Belton, D.~Guillot, A.~Khare, and M.~Putinar,
\emph{Distance preservers for Lobachevsky space},
arXiv preprint arXiv:2608.22568 (2026).
\href{https://arxiv.org/abs/2608.22568}{arXiv:2608.22568}.

\bibitem{Bernstein}
S. Bernstein,
\emph{Sur les fonctions absolument monotones},
Acta Math. \textbf{52} (1929), no.~1, 1--66.
\href{https://doi.org/10.1007/BF02592679}
{doi:10.1007/BF02592679}.

\bibitem{BharaliHoltz}
G. Bharali and O. Holtz,
\emph{Functions preserving nonnegativity of matrices},
SIAM J. Matrix Anal. Appl. \textbf{30} (2008), no.~1, 84--101.
\href{https://doi.org/10.1137/050645075}{doi:10.1137/050645075}.

\bibitem{BodirskyKummerThom}
M. Bodirsky, M. Kummer, and A. Thom,
\emph{Spectrahedral shadows and completely positive maps on real closed fields},
J. Eur. Math. Soc. \textbf{28} (2026), no.~5, 2233--2259.
\href{https://doi.org/10.4171/JEMS/1509}{doi:10.4171/JEMS/1509}.

\bibitem{choudhury2025entrywise}
P.~Nath Choudhury and S.~Yadav,
\emph{Entrywise preservers of sign regularity},
arXiv preprint arXiv:2509.17902 (2025).
\href{https://arxiv.org/abs/2509.17902}{arXiv:2509.17902}.

\bibitem{damase2024multivariate}
S. S. Damase and A. Khare,
\emph{Multivariate transforms of total positivity},
arXiv preprint arXiv:2411.03391 (2024).
\href{https://arxiv.org/abs/2411.03391}{arXiv:2411.03391}.

\bibitem{FitzGeraldHorn}
C. H. FitzGerald and R. A. Horn,
\emph{On fractional Hadamard powers of positive definite matrices},
J. Math. Anal. Appl. \textbf{61} (1977), no.~3, 633--642.
\href{https://doi.org/10.1016/0022-247X(77)90167-6}
{doi:10.1016/0022-247X(77)90167-6}.

\bibitem{fitzgerald1995functions}
C. H. FitzGerald, C. A. Micchelli, and A. Pinkus,
\emph{Functions that preserve families of positive semidefinite matrices},
Linear Algebra Appl. \textbf{221} (1995), 83--102.
\href{https://doi.org/10.1016/0024-3795(93)00232-O}
{doi:10.1016/0024-3795(93)00232-O}.

\bibitem{Folland}
G. B. Folland,
\emph{Real Analysis: Modern Techniques and Their Applications},
2nd ed.,
John Wiley \& Sons, New York, 1999.

\bibitem{FriedlanderJoshi}
F. G. Friedlander and M. Joshi,
\emph{Introduction to the Theory of Distributions},
2nd ed.,
Cambridge University Press, Cambridge, 1998.

\bibitem{guillot2025positivity}
D. Guillot, H. Gupta, P. K. Vishwakarma, and C. H. Yip,
\emph{Positivity preservers over finite fields},
J. Algebra \textbf{684} (2025), 479--523.
\href{https://doi.org/10.1016/j.jalgebra.2025.07.016}
{doi:10.1016/j.jalgebra.2025.07.016}.

\bibitem{guillot2026entrywise}
D. Guillot, H. Gupta, P. K. Vishwakarma, and C. H. Yip,
\emph{Entrywise transforms preserving matrix positivity and nonpositivity},
J. Lond. Math. Soc. \textbf{113} (2026), no.~5, e70560.
\href{https://doi.org/10.1112/jlms.70560}{doi:10.1112/jlms.70560}.

\bibitem{guillot2012retaining}
D. Guillot and B. Rajaratnam,
\emph{Retaining positive definiteness in thresholded matrices},
Linear Algebra Appl. \textbf{436} (2012), no.~11, 4143--4160.
\href{https://doi.org/10.1016/j.laa.2012.01.013}
{doi:10.1016/j.laa.2012.01.013}.

\bibitem{guillot2015functions}
D. Guillot and B. Rajaratnam,
\emph{Functions preserving positive definiteness for sparse matrices},
Trans. Amer. Math. Soc. \textbf{367} (2015), no.~1, 627--649.
\href{https://doi.org/10.1090/S0002-9947-2014-06183-7}
{doi:10.1090/S0002-9947-2014-06183-7}.

\bibitem{guillot2015complete}
D. Guillot, A. Khare, and B. Rajaratnam,
\emph{Complete characterization of Hadamard powers preserving Loewner positivity, monotonicity, and convexity},
J. Math. Anal. Appl. \textbf{425} (2015), no.~1, 489--507.
\href{https://doi.org/10.1016/j.jmaa.2014.12.048}
{doi:10.1016/j.jmaa.2014.12.048}.

\bibitem{GKR-critG}
D. Guillot, A. Khare, and B. Rajaratnam,
\emph{Critical exponents of graphs},
J. Combin. Theory Ser. A \textbf{139} (2016), 30--58.
\href{https://doi.org/10.1016/j.jcta.2015.11.003}
{doi:10.1016/j.jcta.2015.11.003}.
See also \emph{Corrigendum: Critical exponents of graphs},
J. Combin. Theory Ser. A \textbf{223} (2026), 106222.
\href{https://doi.org/10.1016/j.jcta.2026.106222}
{doi:10.1016/j.jcta.2026.106222}.

\bibitem{GKR-sparse}
D. Guillot, A. Khare, and B. Rajaratnam,
\emph{Preserving positivity for matrices with sparsity constraints},
Trans. Amer. Math. Soc. \textbf{368} (2016), no.~12, 8929--8953.
\href{https://doi.org/10.1090/tran/6669}{doi:10.1090/tran/6669}.

\bibitem{GuillotKhareRajaratnam}
D. Guillot, A. Khare, and B. Rajaratnam,
\emph{Preserving positivity for rank-constrained matrices},
Trans. Amer. Math. Soc. \textbf{369} (2017), no.~9, 6105--6145.
\href{https://doi.org/10.1090/tran/6826}{doi:10.1090/tran/6826}.

\bibitem{HardyLittlewoodPolya}
G. H. Hardy, J. E. Littlewood, and G. P\'olya,
\emph{Inequalities},
2nd ed.,
Cambridge University Press, Cambridge, 1952.

\bibitem{Heinavaara}
O. Hein\"avaara,
\emph{Local characterizations for the matrix monotonicity and convexity
of fixed order},
Proc. Amer. Math. Soc. \textbf{146} (2018), no.~9, 3791--3799.
\href{https://doi.org/10.1090/proc/13674}
{doi:10.1090/proc/13674}.

\bibitem{Herz}
C. S. Herz,
\emph{Fonctions op\'erant sur les fonctions d\'efinies-positives},
Ann. Inst. Fourier (Grenoble) \textbf{13} (1963), no.~1, 161--180.
\href{https://doi.org/10.5802/aif.137}{doi:10.5802/aif.137}.

\bibitem{hiai2009monotonicity}
F. Hiai,
\emph{Monotonicity for entrywise functions of matrices},
Linear Algebra Appl. \textbf{431} (2009), no.~8, 1125--1146.
\href{https://doi.org/10.1016/j.laa.2009.04.001}
{doi:10.1016/j.laa.2009.04.001}.

\bibitem{Hormander}
L. H{\"o}rmander,
\emph{The Analysis of Linear Partial Differential Operators I:
Distribution Theory and Fourier Analysis},
2nd ed.,
Springer-Verlag, Berlin, 1990.

\bibitem{Horn}
R. A. Horn,
\emph{The theory of infinitely divisible matrices and kernels},
Trans. Amer. Math. Soc. \textbf{136} (1969), 269--286.
\href{https://doi.org/10.1090/S0002-9947-1969-0264736-5}
{doi:10.1090/S0002-9947-1969-0264736-5}.

\bibitem{Karlin}
S. Karlin,
\emph{Total Positivity. Vol. I},
Stanford University Press, Stanford, CA, 1968.

\bibitem{KhareSmooth}
A. Khare,
\emph{Smooth entrywise positivity preservers, a Horn--Loewner master theorem,
and symmetric function identities},
Trans. Amer. Math. Soc. \textbf{375} (2022), no.~3, 2217--2236.
\href{https://doi.org/10.1090/tran/8563}{doi:10.1090/tran/8563}.

\bibitem{KhareMatrixAnalysis}
A. Khare,
\emph{Matrix Analysis and Entrywise Positivity Preservers},
London Math. Soc. Lecture Note Ser. \textbf{471},
Cambridge University Press, Cambridge, 2022.
\href{https://doi.org/10.1017/9781108867122}
{doi:10.1017/9781108867122}.

\bibitem{KhareTao}
A. Khare and T. Tao,
\emph{On the sign patterns of entrywise positivity preservers in fixed dimension},
Amer. J. Math. \textbf{143} (2021), no.~6, 1863--1929.
\href{https://doi.org/10.1353/ajm.2021.0049}{doi:10.1353/ajm.2021.0049}.

\bibitem{mashreghi2026functional}
J. Mashreghi, M. Nasri, and P. K. Vishwakarma,
\emph{Functional calculi, positivity, and convolution of matrices},
J. Math. Anal. Appl. \textbf{565} (2027), no.~1, 130934.
\href{https://doi.org/10.1016/j.jmaa.2026.130934}
{doi:10.1016/j.jmaa.2026.130934}.

\bibitem{PolyaSzego}
G. P\'olya and G. Szeg\"o,
\emph{Aufgaben und Lehrsätze aus der Analysis. Zweiter Band:
Funktionentheorie, Nullstellen, Polynome, Determinanten, Zahlentheorie},
Springer, Berlin, 1925.
\href{https://doi.org/10.1007/978-3-662-38380-3}
{doi:10.1007/978-3-662-38380-3}.

\bibitem{Rudin}
W. Rudin,
\emph{Positive definite sequences and absolutely monotonic functions},
Duke Math. J. \textbf{26} (1959), no.~4, 617--622.
\href{https://doi.org/10.1215/S0012-7094-59-02659-6}
{doi:10.1215/S0012-7094-59-02659-6}.

\bibitem{Schoenberg}
I. J. Schoenberg,
\emph{Positive definite functions on spheres},
Duke Math. J. \textbf{9} (1942), no.~1, 96--108.
\href{https://doi.org/10.1215/S0012-7094-42-00908-6}
{doi:10.1215/S0012-7094-42-00908-6}.

\bibitem{Schur}
I. Schur,
\emph{Bemerkungen zur Theorie der beschr\"ankten Bilinearformen mit unendlich
vielen Ver\"anderlichen}, J. Reine Angew. Math. \textbf{140} (1911), 1--28.
\href{https://doi.org/10.1515/crll.1911.140.1}
{doi:10.1515/crll.1911.140.1}.

\bibitem{Vasudeva}
H. L. Vasudeva,
\emph{Positive definite matrices and absolutely monotonic functions},
Indian J. Pure Appl. Math. \textbf{10} (1979), no.~7, 854--858.

\bibitem{vishwakarma2023positivity}
P. K. Vishwakarma,
\emph{Positivity preservers forbidden to operate on diagonal blocks},
Trans. Amer. Math. Soc. \textbf{376} (2023), no.~8, 5261--5279.
\href{https://doi.org/10.1090/tran/8256}{doi:10.1090/tran/8256}.

\end{thebibliography}
\end{document}